\newtheorem{Thm}{Theorem}[section]
\newtheorem{lemma}[Thm]{Lemma}
\newtheorem{proposition}[Thm]{Proposition}
\newtheorem{definition}[Thm]{Definition}
\newtheorem{theorem}[Thm]{Theorem}
\newcommand{\bitem}{\begin{itemize}}
\newcommand{\eitem}{\end{itemize}}
\newcommand{\benum}{\begin{enumerate}}
\newcommand{\eenum}{\end{enumerate}}
\newcommand{\beq}{\begin{equation}}
\newcommand{\eeq}{\end{equation}}
\newcommand{\ip}[2]{\langle#1,#2\rangle}
\newcommand{\norm}[1]{\|#1\|}
\newcommand{\Sp}{\mbox{supp}}
\newcommand{\intt}{\mbox{int}}
  \newcommand{\R}{\mathbb{R}}
 \newcommand{\Z}{\mathbb{Z}}
\DeclareMathOperator*{\supp}{supp}
\def\ZZ{\mathbb{Z}}
\def\RR{\mathbb{R}}
\def\ZZ{\mathbb{Z}}
\def\eps{\varepsilon}
\newcommand{\bZ}{{\mathbb Z}}
\newcommand{\bR}{{\mathbb R}}
\def\cC{{\mathcal{C}}}
\def\cE{{\mathcal{E}}}
\def\cQ{{\mathcal{Q}}}
\def\cR{{\mathcal{R}}}
\def\cS{{\mathcal{S}}}
\def\cSH{{\mathcal{S}\mathcal{H}}}
\newcommand{\wqq}[1]{#1}
\newcommand{\wq}[1]{#1}
\newcommand{\gkk}[1]{#1}
\newcommand{\wqqq}[1]{#1}
\newcommand{\wql}[1]{#1}
\newcommand{\wang}[1]{{{#1}}}
\begin{document}

\begin{frontmatter}



\title{Compactly Supported Shearlets are\\ Optimally Sparse}


\author[GK]{Gitta Kutyniok\corref{cor1}\fnref{fn1}}
\author[GK]{Wang-Q Lim\fnref{fn1}}

\address[GK]{Institute of Mathematics, University of Osnabr\"uck, 49069 Osnabr\"uck, Germany}

\cortext[cor1]{Corresponding author}

\fntext[fn1]{G.K. and W.-Q L. would like to thank Wolfgang Dahmen,
David Donoho, Chunyan Huang, Demetrio Labate, Christoph Schwab, and
Gerrit Welper for various discussions on related topics. G.K. and
W.-Q L. acknowledge support from DFG Grant SPP-1324, KU 1446/13. G.K. was
also partially supported by DFG Grant, KU 1446/14.}

\begin{abstract}
Cartoon-like images, i.e., $C^2$ functions which are smooth apart from
a $C^2$ discontinuity curve, have by now become a standard model for measuring sparse (non-linear)
approximation properties of directional representation systems. It was already shown that
curvelets, contourlets, as well as shearlets do exhibit (almost) optimally sparse approximations
within this model. However, all those results are only applicable to band-limited
generators, whereas, in particular, spatially compactly supported generators are of
uttermost importance for applications.

In this paper, we now present the first complete proof of (almost) optimally sparse approximations
of cartoon-like images by using a particular class of directional representation systems, which
indeed consists of compactly supported elements. This class will be chosen as a subset of
shearlet frames -- not necessarily required to be tight -- with shearlet generators having compact
support and satisfying some weak moment conditions.
\end{abstract}



\begin{keyword}
Curvilinear discontinuities, edges, nonlinear approximation, optimal sparsity, shearlets, thresholding, wavelets
\end{keyword}





\end{frontmatter}



\section{Introduction}

{In computer vision, edges were detected as those features} governing an image
while separating smooth regions in between. About 10 years ago, mathematicians
started to design models of images incorporating those findings aiming at
designing representation systems which -- in such a model -- are capable of resolving
edges in an optimally sparse way. However, customarily, at that time an image
was viewed as an element of a compact subset of $L_p$ characterized by a given
Besov regularity with the Kolmogorov entropy of such sets identifying lower
bounds for the distortion rates of encoding-decoding pairs in this model.
Although wavelets could be shown to behave optimally \cite{CDDD} as an
encoding methodology, Besov models are clearly deficient since edges are not
adequately captured. This initiated the introduction of a different model,
called cartoon-like model (see \cite{Don99,WRHB02,CD04}), which revealed the
suboptimal treatment of edges by wavelets.

The introduction of tight curvelet frames in 2004 by Cand\'{e}s and Donoho
\cite{CD04}, which provably provide (almost) optimally sparse approximations
within such a cartoon-like model can be considered a milestone in applied harmonic
analysis. One year later, contourlets were introduced by Do and Vetterli \cite{DV05}
which similarly derived (almost) optimal approximation rates. In the same year,
{\em shearlets} were developed by Labate, Weiss, and the authors in \cite{LLKW05} as
the first directional representation system with allows a unified treatment
of the continuum and digital world similar to wavelets, while also providing
(almost) optimally sparse approximations within such a cartoon-like model \cite{GL07}.

In most applications, spatial localization of the analyzing elements of an
encoding system is of
uttermost importance both for a precise detection of geometric features as well as for a fast
decomposition algorithm. However, none of the previously mentioned results
cover this situation. In fact, the proofs which were provided do by no means
extend to this crucial setting.

\medskip

In this paper, we now present the first complete proof of (almost) optimally sparse approximations
of cartoon-like images by using a particular class of directional representation systems, which
indeed consist of compactly supported elements. This class will be chosen as a subset of
shearlet frames -- not necessarily required to be tight -- with shearlet generators having compact
support and satisfying some weak moment conditions. Interestingly, our proof is very different from
all previous ones caused by the extensive exploration of the compact support of
the shearlet generators and the lack of directional vanishing moments.


\subsection{A Suitable Model for Images: Cartoon-Like Images}

Intuitively, cartoons are smooth image parts separated from other areas by an edge.
After a series of initial models \cite{Don99,WRHB02}, the first complete model of
cartoons has been introduced in \cite{CD04}, and this is what we intend to use also
here. The basic idea is to choose a closed boundary curve and then fill the interior
and exterior part with $C^2$ functions (see Figure \ref{fig:Cartoon}).
\begin{figure}[h]
\begin{center}
\vspace*{0.3cm}
\includegraphics[height=1.25in]{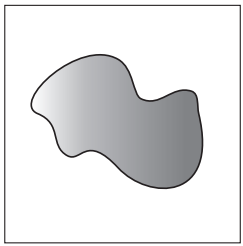}
\vspace*{-0.3cm}
\end{center}
\caption{Example of a cartoon-like image.}
\label{figure}
\label{fig:Cartoon}
\end{figure}

Let us now be more precise, and introduce $STAR^2(\nu)$, a class of indicator functions of
sets $B$ with $C^2$ boundaries $\partial B$ and curvature bounded by $\nu$, as well as
$\cE^2(\nu)$, a class of cartoon-like images. For this, in polar coordinates, we let $\rho(\theta)
\rightarrow [0,1]$ be a radius function and define the set $B$ by
\[
B = \{x \in \RR^2 : |x| \le \rho(\theta), x = (|x|,\theta) \mbox{ in polar coordinates}\}.
\]
In particular, the boundary $\partial B$ of $B$ is given by the curve
\begin{equation}\label{eq:curve}
\beta(\theta) = \begin{pmatrix} \rho(\theta)\cos(\theta) \\
\rho(\theta)\sin(\theta)\end{pmatrix},
\end{equation}
 and the class of boundaries of interest to us are defined by
\begin{equation}\label{eq:curvebound}
\sup|\rho^{''}(\theta)| \leq \nu, \quad \rho \leq \rho_0 < 1.
\end{equation}
The following definition now introduces the notions $STAR^2(\nu)$ and $\cE^2(\nu)$ from \cite{CD04}.

\begin{definition}
For $\nu > 0$, the set $STAR^2(\nu)$ is defined to be the set of all $B \subset [0,1]^2$ such that $B$
is a translate of a set obeying \eqref{eq:curve} and \eqref{eq:curvebound}. Further, $\cE^2(\nu)$
denotes the set of functions $f$ on $\RR^2$ with compact support in $[0,1]^2$ of the form
\[
f = f_0 + f_1 \chi_{B},
\]
where $f_0,f_1 \in C^2(\RR^2)$ with compact support in $[0,1]^2$, $B \in STAR^2(\nu)$, and $\|f\|_{C^2} = \sum_{|\alpha| \leq 2}
\|D^{\alpha}f\|_{\infty} \leq 1.$
\end{definition}


\subsection{Optimal Sparsity of a Directional Representation System}
\label{subsec:optsparse}

The `quality' of the performance of a (directional) representation system with respect to
cartoon-like images is typically measured by taking a non-linear approximation viewpoint.
More precisely, given a cartoon-like image $f \in \cE^2(\nu)$ and a (directional)
representation system $(\sigma_i)_{i \in I}$ which forms an orthonormal basis, the chosen measure
is the asymptotic behavior of the best $N$-term (non-linear) approximation error in $L^2$ norm
in the number of terms $N$, i.e.,
\[
\norm{f-f_N}_2^2 = \Big\|f - \sum_{i \in I_N} \ip{f}{\sigma_i}\sigma_i\Big\|_2^2 \quad \mbox{as } N \to \infty,
\]
where $(\ip{f}{\sigma_i})_{i \in I_N}$ are the $N$ largest coefficients $\ip{f}{\sigma_i}$
in magnitude. Wavelet bases exhibit the approximation rate
\[
\norm{f-f_N}_2^2 \le C \cdot N^{-1} \quad \mbox{as } N \to \infty.
\]
However, Donoho proved in \cite{Don01} that the optimal rate which can be achieved under some
restrictions on the representation system as well as on the selection procedure of
the approximating coefficients is
\[
\norm{f-f_N}_2^2 \le C \cdot N^{-2} \quad \mbox{as } N \to \infty.
\]
It was a breakthrough in 2004, when Cand\'{e}s and Donoho introduced the tight curvelet
frame in \cite{CD04} and proved  that this system indeed does satisfy
\[
\norm{f-f_N}_2^2 \le C \cdot N^{-2} \cdot (\log N)^{3} \quad \mbox{as } N \to \infty,
\]
where again the approximation $f_N$ was generated by the $N$ largest coefficients in magnitude.
Although the optimal rate is not completely achieved, the $\log$-factor is typically considered negligible
compared to the $N^{-2}$-factor, wherefore the term `almost optimal' has been adopted into the
language. This result is even more surprising taking into account that in case of a tight frame
the approximation by the $N$ largest coefficients in magnitude does not even always yield the
{\em best} $N$-term approximation.


\subsection{(Compactly Supported) Shearlet Systems}

The directional representation system of {\em shearlets} has recently emerged and rapidly gained
attention due to the fact that -- in contrast to other proposed directional representation systems --
shearlets provide a unified treatment of the continuum and digital world similar to wavelets.
We refer to, e.g., \cite{GKL06,KL09} for the continuum theory, \cite{DKS08,ELL08,Lim09} for the digital
theory, and \cite{GLL09,DK10} for recent applications.

Shearlets are scaled according to a parabolic scaling law encoded in the {\em parabolic scaling matrices}
$A_{2^j}$ or $\tilde{A}_{2^j}$, $j \in \ZZ$, and exhibit directionality by parameterizing slope encoded
in the {\em shear matrices} $S_k$, $k \in \ZZ$, defined by
\[
A_{2^j} =
\begin{pmatrix}
  2^j & 0\\ 0 & 2^{j/2}
\end{pmatrix}
\qquad\mbox{or}\qquad
\tilde{A}_{2^j} =
\begin{pmatrix}
  2^{j/2} & 0\\ 0 & 2^j
\end{pmatrix}
\]
and
\[
S_k = \begin{pmatrix}
  1 & \wql{k} \\ 0 & 1
\end{pmatrix},
\]
respectively.

To ensure an (almost) equal treatment of the different slopes, which is
evidently of significant importance for practical applications, we partition the frequency
plane into the following four cones $\cC_1$ -- $\cC_4$:
\[
\cC_\iota = \left\{ \begin{array}{rcl}
\{(\xi_1,\xi_2) \in \bR^2 : \xi_1 \ge 1,\, |\xi_2/\xi_1| \le 1\} & : & \iota = 1,\\
\{(\xi_1,\xi_2) \in \bR^2 : \xi_2 \ge 1,\, |\xi_1/\xi_2| \le 1\} & : & \iota = 2,\\
\{(\xi_1,\xi_2) \in \bR^2 : \xi_1 \le -1,\, |\xi_2/\xi_1| \le 1\} & : & \iota = 3,\\
\{(\xi_1,\xi_2) \in \bR^2 : \xi_2 \le -1,\, |\xi_1/\xi_2| \le 1\} & : & \iota = 4,
\end{array}
\right.
\]
and a centered rectangle
\[
\cR = \{(\xi_1,\xi_2) \in \bR^2 : \|(\xi_1,\xi_2)\|_\infty < 1\}.
\]
For an illustration, we refer to Figure \ref{fig:shearlet}(a).

\begin{figure}[ht]
\begin{center}
\includegraphics[height=1.4in]{./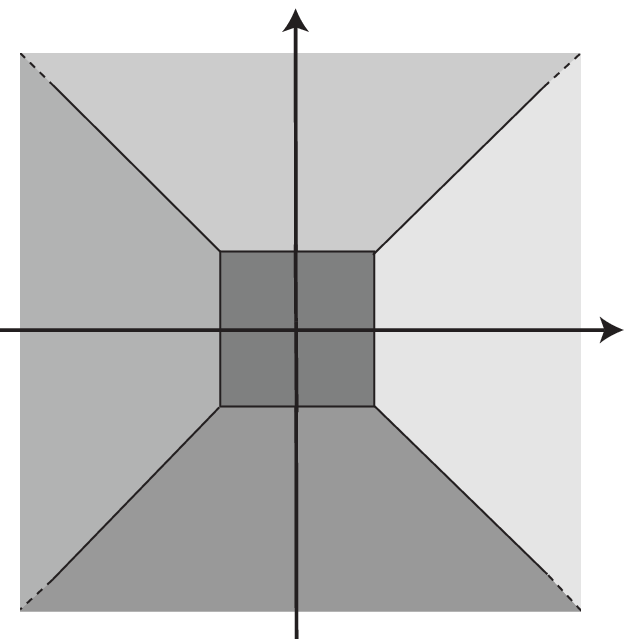}
\put(-33,58){\footnotesize{$\cC=\cC_1$}}
\put(-70,80){\footnotesize{$\cC_2$}}
\put(-88,30){\footnotesize{$\cC_3$}}
\put(-50,52){\footnotesize{$\cR$}}
\put(-45,15){\footnotesize{$\cC_4$}}
\put(-60,-17){(a)}
\hspace*{4cm}
\includegraphics[height=1.4in]{./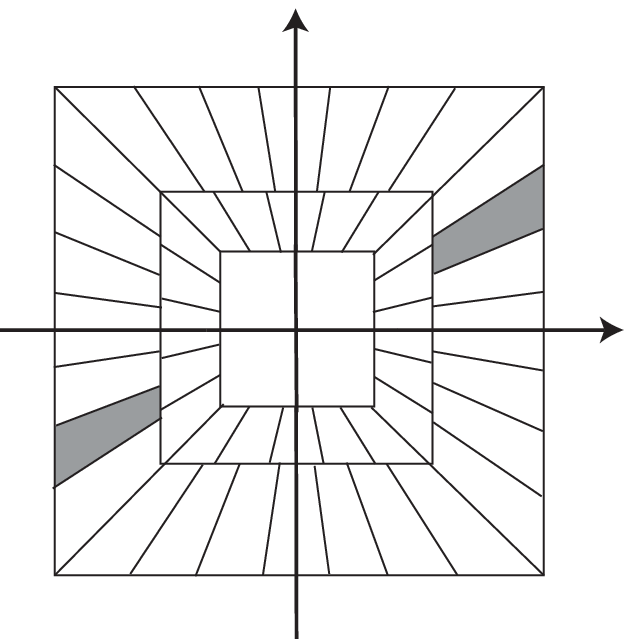}
\put(-60,-17){(b)}
\end{center}
\caption{(a) The cones $\cC_1$ -- $\cC_4$ and the centered rectangle $\cR$ in frequency domain.
(b) The tiling of the frequency domain induced by a (cone-adapted) shearlet system.}
\label{fig:shearlet}
\end{figure}

The rectangle $\cR$ corresponds to the low frequency content of a signal and is customarily
represented by translations of some scaling function. Anisotropy comes into play when
encoding the high frequency content of a signal which corresponds to the cones $\cC_1$ -- $\cC_4$,
where the cones $\cC_1$ and $\cC_3$ as well as $\cC_2$ and $\cC_4$ are treated separately as can be seen
in the following

\begin{definition}
\label{defi:discreteshearlets}
For some sampling constant $c > 0$, the {\em (cone-adapted) shearlet system} $\cSH(c;\phi,\psi,\tilde{\psi})$ generated by
a {\em scaling function} $\phi \in L^2(\mathbb{R}^2)$ and {\em shearlets} $\psi, \tilde{\psi} \in L^2(\mathbb{R}^2)$ is
defined by
\[
\cSH(c;\phi,\psi,\tilde{\psi}) = \Phi(c;\phi) \cup \Psi(c;\psi) \cup \tilde{\Psi}(c;\tilde{\psi}),
\]
where
\[
\Phi(c;\phi) = \{\phi_m  = \phi(\cdot-cm) : m \in \bZ^2\},
\]
\[
\Psi(c;\psi) = \{\psi_{j,k,m} =  2^{3j/4} {\psi}({S}_{k} {A}_{2^j}\cdot-cm) :
j \ge 0, |k| \le \lceil 2^{j/2} \rceil, m \in \bZ^2 \},
\]
and
\[
\tilde{\Psi}(c;\tilde{\psi}) = \{\tilde{\psi}_{j,k,m} =  2^{3j/4} \tilde{\psi}(S^T_{k} \tilde{A}_{2^j}\cdot-cm) :
j \ge 0, |k| \le \lceil 2^{j/2} \rceil, m \in \bZ^2 \}.
\]
\end{definition}
The reader should keep in mind that although not indicated by the notation, the functions $\phi_m$, $\psi_{j,k,m}$, and
$\tilde{\psi}_{j,k,m}$ all depend on the sampling constant $c$. For the sake of brevity, we will often write $\psi_\lambda$ and $\tilde{\psi}_{\lambda}$,
where $\lambda = (j,k,m)$ index scale, shear, and position. For later use, we further let $\Lambda_j$ be the indexing sets
of shearlets in $\Psi(c;\psi)$ and $\tilde{\Psi}(c;\tilde{\psi})$ at scale $j$,
respectively, i.e.,
\[
\Psi(c;\psi) = \{\psi_{\lambda} : \lambda \in \Lambda_j, j=0, \ldots, \infty\}
\]
and
\[
\tilde{\Psi}(c;\tilde{\psi}) = \{\tilde{\psi}_{\lambda} : \lambda \in \Lambda_j, j=0, \ldots, \infty\}.
\]
Finally, we define
\[
\Lambda = \bigcup_{j=0}^\infty \Lambda_j.
\]

The tiling of frequency domain induced by $\cSH(c;\phi,\psi,\tilde{\psi})$ is illustrated in Figure \ref{fig:shearlet}(b).
From this illustration, the anisotropic footprints of shearlets contained in $\Psi(c;\psi)$ and $\tilde{\Psi}(c;\tilde{\psi})$
can clearly be seen. However, the reader should notice that the tiling indicated here is based on the {\em essential support} and not
the exact support of the analyzing elements, since our focus will be on shearlet systems associated with spatially compactly
supported generators.
The corresponding anisotropic footprints of shearlets {\em in spatial domain} are of size $2^{-j/2}$ times
$2^{-j}$. A beautiful intuitive extensive explanation of why it is conceivable that such a system -- based on parabolic scaling --
exhibits optimal sparse approximation of cartoon-like images, is provided in \cite{CD04}, and we would like
to refer the reader to this paper. The main idea is to count the number of shearlets intersecting the
discontinuity curves, which is `small' compared to the number of such wavelets, due to their anisotropic footprints.

Certainly, we naturally ask the question when $\cSH(c;\phi,\psi,\tilde{\psi})$ does form a frame for $L^2(\RR^2)$.
The wavelet literature provides various necessary and sufficient conditions for $\Phi(c;\phi)$ to
form a frame for $L^2(\{f \in L^2(\RR^2) : \Sp(\wq{\hat{f}}) \subseteq \cR\})$, also when $\phi$ is compactly
supported in spatial domain. Although not that well-studied as wavelets yet, several answers are also
known for the question when $\Psi(c;\psi)$ forms a frame for
\[
L^2(\{f \in L^2(\RR^2) : \Sp(\wq{\hat f}) \subseteq \cC_1 \cup \cC_3\}),
\]
and we refer to results in \cite{GKL06,KL07,DKST09,KKL10b}.
Since $\Psi(c;\phi)$ and $\tilde{\Psi}(c;\tilde{\psi})$ are linked by a simple rotation of $90^o$, these results
immediately provide conditions for $\tilde{\Psi}(c;\tilde{\psi})$ to constitute a frame for
\[
L^2(\{f \in L^2(\RR^2) : \Sp(\wq{\hat f}) \subseteq \cC_2 \cup \cC_4\}).
\]
Very recent results in \cite{KKL10a}
even focus specifically on the case of spatially compactly supported shearlets --  of uttermost
importance for applications due to their superior localization. For instance, in \cite{KKL10a},
the following special class of compactly supported shearlet frames for $L^2(\bR^2)$ was constructed:
The generating shearlets $\psi$  and $\tilde{\psi}$ were chosen separable, i.e., of the form
$\psi_1(x_1)\cdot\psi_2(x_2)$  and $\psi_1(x_2)\cdot\psi_2(x_1)$, respectively, where $\psi_1$ is a wavelet and $\psi_2$ is
a scaling function both associated with some carefully chosen low pass filter. Intriguingly,
our main result in this paper (Theorem \ref{theo:main}) proves as a corollary that this
class of compactly supported shearlet frames provides (almost) optimally sparse approximations
of cartoon-like images. We refer to \cite{KKL10a} for the precise statement.

Combining those thoughts, we can attest that frame properties of the system $\cSH(c;\phi,\psi,\tilde{\psi})$
including spatially compactly supported generators are already quite well studied.
We however wish to mention that there is a trade-off between {\em compact support} of the shearlet generators, {\em tightness} of the associated frame,
and {\em separability} of the shearlet generators. The known constructions of tight shearlet frames do not use separable generators, and
these constructions can be shown to {\em not} be applicable to compactly supported generators. Tightness is difficult to obtain while
allowing for compactly supported generators, but we can gain separability, hence fast algorithmic realizations. On the other hand, when allowing
non-compactly supported generators, tightness is possible, but separability seems to be out of reach, which
makes fast algorithmic realizations very difficult.


\subsection{Optimally Sparse Approximation of Cartoon-Like Images by Shearlets}

The concept of optimally sparse approximation of cartoon-like images of general (directional)
representation systems was already discussed in Section \ref{subsec:optsparse}.
However, the attentive reader will have realized that only the situation of
tight frames was studied whereas here we need to consider sparse
approximations by arbitrary frames. Hence this situation deserves a careful
commenting.

Let $\cSH(c;\phi,\psi,\tilde{\psi})$  be a shearlet frame for $L^2(\RR^2)$, which for
illustrative purposes for a moment we denote by $\cSH(c;\phi,\psi,\tilde{\psi}) = (\sigma_i)_{i \in I}$,
say. Is it well-known that a frame is associated with a canonical dual frame, which in this
case we want to call $(\tilde{\sigma}_i)_{i \in I}$. Then we define the $N$-term approximation $f_N$
of a cartoon-like image $f \in \cE^2(\nu)$ by the frame $\cSH(c;\phi,\psi,\tilde{\psi})$ to be
\[
f_N = \sum_{i \in I_N} \ip{f}{\sigma_i}\tilde{\sigma}_i,
\]
where $(\ip{f}{\sigma_i})_{i \in I_N}$ are the $N$ largest coefficients $\ip{f}{\sigma_i}$
in magnitude. As in the tight frame case, this procedure does not always yield the {\em best}
$N$-term approximation, but surprisingly even with this `crude' selection procedure -- in
the situation of spatially compactly supported generators -- we can prove an (almost) optimally
sparse approximation rate as our main result shows.

\begin{theorem}\label{theo:main}
Let $c > 0$, and let $\phi, \psi, \tilde{\psi} \in L^2(\RR^2)$ be compactly supported. Suppose
that, in addition, for all $\xi = (\xi_1,\xi_2) \in \RR^2$, the shearlet $\psi$ satisfies\\[-1.25ex]
\bitem
\item[(i)] $|\hat\psi(\xi)| \le C_1 \cdot \min(1,|\xi_1|^{\alpha}) \cdot \min(1,|\xi_1|^{-\gamma}) \cdot \min(1,|\xi_2|^{-\gamma})$ and\\[-0.1ex]
\item[(ii)] $\left|\frac{\partial}{\partial \xi_2}\hat \psi(\xi)\right|
\le  \wqqq{|h(\xi_1)|} \cdot \left(1+\frac{|\xi_2|}{|\xi_1|}\right)^{-\gamma}$,\\[-0.2ex]
\eitem
where $\alpha > 5$, $\gamma \ge 4$, $h \in L^1(\bR)$, and $C_1$ is a constant, and suppose that
the shearlet $\tilde{\psi}$ satisfies (i) and (ii) with the roles of $\xi_1$ and $\xi_2$ reversed.
Further, suppose that $\cSH(c;\phi,\psi,\tilde{\psi})$ forms a
 frame for $L^2(\RR^2)$.

Then, for any $\nu > 0$, the shearlet frame $\cSH(c;\phi,\psi,\tilde{\psi})$ provides (almost)
optimally sparse approximations of functions $f \in \cE^2(\nu)$, i.e., there exists some $C > 0$ such
that
\[
\|f-f_N\|_2^2 \leq C\cdot N^{-2} \cdot {(\log{N})}^3 \qquad \text{as } N \rightarrow \infty,
\]
where $f_N$ is the nonlinear N-term approximation obtained by choosing the N largest shearlet coefficients
of $f$.
\end{theorem}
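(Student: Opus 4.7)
My plan is to convert the $L^2$ error into a tail sum of shearlet coefficients via the frame inequality and then to establish a weak-$\ell^{2/3}$ bound on those coefficients. Writing $(\sigma_\lambda)_\lambda$ for an enumeration of $\cSH(c;\phi,\psi,\tilde\psi)$, $(\tilde\sigma_\lambda)_\lambda$ for its canonical dual, and $A>0$ for the lower frame bound, the dual is a Bessel sequence with bound $A^{-1}$, hence
\[
\|f-f_N\|_2^2=\Bigl\|\sum_{\lambda\notin I_N}\langle f,\sigma_\lambda\rangle\,\tilde\sigma_\lambda\Bigr\|_2^2 \le A^{-1}\sum_{\lambda\notin I_N}|\langle f,\sigma_\lambda\rangle|^2.
\]
It therefore suffices to show that the decreasing rearrangement $(|c_{(n)}|)_{n}$ of $c_\lambda=\langle f,\sigma_\lambda\rangle$ satisfies $|c_{(n)}|\le C\,n^{-3/2}(\log n)^{3/2}$, since the tail for $n>N$ then produces the claimed $N^{-2}(\log N)^3$ bound.

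To obtain this rearrangement estimate I would split $f=f_0+f_1\chi_B$. The $f_0$ part is fully $C^2$ with compact support, so its shearlet coefficients decay rapidly in scale directly from hypothesis (i): the factor $\min(1,|\xi_1|^\alpha)$ with $\alpha>5$ encodes enough vanishing moments of $\psi$ in the $x_1$-direction (and symmetrically for $\tilde\psi$) to push those coefficients well below the weak-$\ell^{2/3}$ threshold. The heart of the proof is therefore the jump term $g=f_1\chi_B$. At every fixed scale $j$ I would partition $\Lambda_j$ according to the relation between the compact support of $\psi_\lambda$ (a parallelogram of size roughly $2^{-j/2}\times 2^{-j}$ sheared by $k$) and the boundary curve $\partial B$: (a) supports that miss $\partial B$; (b) supports meeting $\partial B$ whose shear $k/2^{j/2}$ nearly matches the tangent slope of $\beta(\theta)$ there; (c) supports meeting $\partial B$ but with shear misaligned from that tangent.

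For (a), a local Taylor expansion of $f_1$ in the support combined with (i) yields arbitrary polynomial scale decay, so these indices are harmless. For (b), I would apply the anisotropic change of variables $y=S_kA_{2^j}(x-x_0)$ at a point $x_0\in\partial B$ inside the support; in the new coordinates $\psi_\lambda$ becomes $2^{-3j/4}\psi(y)$ and, by $\sup|\rho''|\le\nu$, the boundary becomes a graph $y_2=\text{const}+O(2^{-j/2})\,y_1^2$. Integrating $\psi$ against the resulting nearly-affine half-plane, using the decay bounds (i) to control the integral, then produces the sharp estimate $|c_\lambda|\le C\,2^{-3j/4}$ uniformly in the centre's location along the curve. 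For (c), condition (ii) -- transverse decay of $\hat\psi$ that is uniform in $\xi_1$ -- translates, after the same change of variables, into additional decay in the shear mismatch $|k-k_\ast(m,j)|$, where $k_\ast$ is the shear best aligned with $\partial B$ in the support of $\psi_\lambda$; this decay is summable in $k$ and is dominated by the type (b) bound. A geometric counting argument using $\text{length}(\partial B)=O(1)$ and curvature at most $\nu$ shows that there are $O(2^{j/2})$ shearlets of type (b) at scale $j$; arranging all significant coefficients from scales $j\le J\sim\log N$ in decreasing order then yields the desired $|c_{(n)}|\le C\,n^{-3/2}(\log n)^{3/2}$, the logarithmic losses arising from summing over the $O(2^{j/2})$ shear parameters and from the bookkeeping between consecutive scales.

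The main obstacle is precisely the estimate in case (b) together with its coupling to (c). Because the generators are compactly supported in space, their Fourier support is not localised, so the clean Plancherel-based arguments of \cite{CD04,GL07} are unavailable; nor are exact directional vanishing moments at hand. The proof must instead convert the purely polynomial Fourier decay in (i)-(ii) into a sharp spatial estimate on $\langle\chi_B,\psi_{j,k,m}\rangle$ that simultaneously resolves position, shear mismatch, and curvature of $\partial B$. Extracting the exponent $3/4$ and the summable decay in shear mismatch requires a delicate balance between the tails controlled by $\alpha$ and $\gamma$ and the curvature-induced geometric error in the anisotropic coordinates; the thresholds $\alpha>5$ and $\gamma\ge 4$ appear calibrated precisely so that these competing contributions are absorbable into the final weak-$\ell^{2/3}$ bound.
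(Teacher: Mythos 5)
Your architecture coincides with the paper's: reduce to a tail sum via the lower frame bound and the Bessel property of the canonical dual, dispose of the smooth contribution by exploiting the vanishing moments encoded in $\min(1,|\xi_1|^{\alpha})$, and for shearlets whose (exact, compact) support meets $\partial B$ combine a per-scale count of aligned elements with decay in the shear mismatch. The gap is that the one estimate carrying the entire theorem is asserted rather than proved, and asserted too weakly. What is needed is the precise rate $|\langle f,\psi_{j,k,m}\rangle|\le C\,2^{-3j/4}\,|\hat k|^{-3}$ with $\hat k=k+2^{j/2}s$, $s$ the tangent slope of $\partial B$ inside the support. "Summable decay in $k$" does not suffice: since the number of translates $m$ with $(j,k,m)\in\Lambda_{j,p}$ grows like $|\hat k|+1$, a decay rate $2^{-3j/4}|\hat k|^{-\beta}$ yields $\#\{\lambda:|\langle f,\psi_\lambda\rangle|>\eps\}\lesssim\sum_{j}2^{j/2}\bigl(\eps^{-1/3}2^{-j/4}\bigr)^{2\cdot 3/\beta}$, which for any $\beta<3$ is dominated by the coarse scales and gives $\eps^{-2/\beta}\gg\eps^{-2/3}$, destroying the $N^{-2}(\log N)^3$ rate; the exponent $3$ is exactly critical and produces the logarithm. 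Moreover the mechanism you propose for it is not the one that works: condition (ii) alone does not generate $|\hat k|^{-3}$. In the actual argument the exponent $3$ arises from two geometric sources: the volume of the smallest parallelogram $\mathcal{P}$ containing $\partial B$ inside $\Sp(\psi_{j,k,m})$ and with sides parallel to the tangent (length $\sim 2^{-j/2}/|\hat k|$ by transversality of the tangent to the sheared support, height $\sim(2^{-j/2}/|\hat k|)^2$ by the curvature bound, so $|\mathcal{P}|\sim 2^{-3j/2}|\hat k|^{-3}$), and the quadratic remainder of a transverse Taylor expansion on the complement of $\mathcal{P}$, where the edge has been replaced by its tangent half-plane. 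Conditions (i) and (ii) enter only to annihilate the zeroth- and first-order Taylor terms, via Fourier inversion along lines, at the rate $|\hat k|^{-\gamma}$ with $\gamma\ge4$; this is where the hypotheses are consumed, not in producing the $|\hat k|^{-3}$ itself.

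Two further points. First, your change of variables sends the edge to $y_1=\hat k\,y_2+O(y_2^2)$ with an $O(1)$ (not $O(2^{-j/2})$) curvature coefficient, so the "nearly affine" picture has to be quantified through the parallelogram construction above rather than by treating the curvature as negligible. Second, you omit the case where $\partial B$ is nearly orthogonal to the orientations covered by $\Psi(c;\psi)$ (tangent slope $|s|>2$ or $s=\infty$ in the paper's normalization); there no shear $|k|\le\lceil 2^{j/2}\rceil$ aligns with the edge, the bound $2^{-3j/4}|\hat k|^{-3}$ is not available in that form, and one needs the separate estimate $|\langle f,\psi_\lambda\rangle|\le C\,2^{-9j/4}$, proved using the genuine vanishing moments $\int x_1^{\ell}\psi(x_1,x_2)\,dx_1=0$, $\ell=0,1$, that follow from condition (i) and are preserved under shearing. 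Your plan is the right plan, but without Proposition \ref{prop:main2} and its proof it does not yet establish the theorem.
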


Condition (i) can be interpreted as both a condition ensuring (almost) separable behavior
as well as a first order moment condition along the horizontal axis, hence enforcing directional selectivity.
This condition ensures that the support of shearlets in frequency domain is essentially of
the form indicated in Figure \ref{fig:shearlet}(b). Condition~(ii) (together with (i)) is a weak version of a directional vanishing
moment condition\footnote{For the precise definition of directional
  vanishing moments, we refer to \cite{DV05}. }, which is crucial for
having fast decay of the shearlet coefficients when the corresponding
shearlet intersects the discontinuity curve. Conditions~(i) and (ii)
are rather mild conditions on the generators. To compare with the
optimality result for band-limited generators we wish to point out
that conditions~(i) and (ii) are obviously satisfied for band-limited
generators.

Notice also that, intriguingly, the -- the `true' optimality destroying --  $\log$-factor has the
{\em same} exponent as in the curvelet-, contourlet-, and shearlet-result on (almost) optimally
sparse approximation.


\subsection{Prior Work and Our Contribution}

In 2004, Cand\'{e}s and Donoho \cite{CD04} achieved a breakthrough when introducing
tight curvelet frames, which provide (almost) optimally sparse approximations of cartoon-like
images (functions in $\cE^2(\nu)$). The main outline of their proof is to break $[0,1]^2$ into smaller
cubes and then separately analyze the
curvelet coefficients essentially centered in the smooth part of the model and those essentially
centered on the discontinuity curve. For both sets of coefficients their weak-$\ell_{2/3}$ norm
is estimated; the estimate for the `non-smooth part' also requiring the usage of the Radon transform.

A year later, Do and Vetterli \cite{DV05} introduced contourlets and proved similar sparsity results
for those. However, although their work includes contourlets with compact support, their construction
is fully based on discrete filter banks so that directional selectivity is problematic. Because of
this fact, infinite directional vanishing moments had to be artificially imposed in order to achieve
(almost) optimal sparsity. However, this is impossible for any function with compact support to
satisfy. Hence, similar to curvelets, optimal sparsity is only proven for {\em band-limited}
contourlets.

In 2005, shearlets were introduced as the first directional representation system ensuring a unified
treatment of the continuum and digital world by Labate, Weiss, and the authors in \cite{LLKW05}.
One year later, Labate and Guo proved (almost) optimally sparse approximations of cartoon-like images
for the at that time customarily utilized shearlet frames \cite{GL07}, which are band-limited
such as curvelets. The proof the authors provided follows the proof in \cite{CD04} very closely
step by step.

Concluding, although those pioneering studies deserve all our credit, these results are far from
including the important class of directional representation systems consisting of compactly
supported functions.

\medskip

The main contribution of this paper is to provide the first complete proof of (almost)
optimally sparse approximations of cartoon-like images using a directional representation system
consisting of compactly supported functions. Our proof is indeed very different from all previous
ones caused by the necessary extensive exploration of the compact support of the shearlet generators,
the only similarity being the breaking of $[0,1]^2$ into smaller cubes and the separate
consideration of shearlet coefficients now being {\em exactly contained} -- in contrast to
being {\em essentially contained} for all
other systems --  in the smooth part and those which intersect the discontinuity curve.
Previous results all require moment conditions along the direction of the discontinuity
curve -- thereby requiring vanishing moments along infinitely many directions asymptotically
in scale --, which
is trivially satisfied for band-limited generators. Intriguingly, a weaker version of
directional vanishing moments, even only in one direction and the shearing taking care of the
remaining directions, is sufficient for our analysis.




\subsection{Outline}

In Section \ref{sec:architecture}, we present the overall structure of the proof, the
results of the analysis of shearlet coefficients being contained in the smooth part and
those which intersect the discontinuity curve, and -- based on these results -- state the proof
of Theorem \ref{theo:main}. The proofs of the results on the behavior of shearlet coefficients
in the smooth and non-smooth part are then carried out in Sections \ref{sec:smoothpart} and
\ref{sec:discontinuity}, respectively.


\section{Architecture of the Proof of Theorem \ref{theo:main}}
\label{sec:architecture}

We now detail the general structure of the proof of Theorem \ref{theo:main}, starting by
introducing useful notions and explaining the blocking into smaller boxes and splitting
into the smooth and non-smooth part. Then the main results concerning the analysis of
shearlet coefficients being entirely contained in the smooth part and those intersecting
the discontinuity curve will be presented followed by the proof of Theorem \ref{theo:main}
based on those.


\subsection{General Organization}

Let now $\cSH(c;\phi,\psi,\tilde{\psi})$ satisfy the hypotheses of Theorem \ref{theo:main}, and
let $f \in \cE^2(\nu)$. Further, we let $A$ denote the lower frame bound of $\cSH(c;\phi,\psi,\tilde{\psi})$.

We first observe that, without loss of generality, we might assume
the scaling index $j$ to be sufficiently large, since $f$ as well as all frame elements in the shearlet
frame $\cSH(c;\phi,\psi,\tilde{\psi})$ are compactly supported in spatial domain, hence a finite
number does not contribute to the asymptotic estimate we aim for. In particular, this means that
we do not need to consider frame elements from $\Phi(c;\phi)$. Also, we are allowed to restrict our
analysis to shearlets $\psi_{j,k,m}$, since the frame elements $\widetilde{\psi}_{j,k,m}$ can be
handled in a similar way.

Our main concern will be to derive appropriate estimates for the shearlet coefficients
$\{\ip{f}{\psi_{\lambda}} : \lambda \in \Lambda\}$ of $f$. Letting $|\theta(f)|_n$ denote
the $n$th largest shearlet coefficient $\ip{f}{\psi_{\lambda}}$ in absolute value and exploring
the frame property of  $\cSH(c;\phi,\psi,\tilde{\psi})$, we conclude that
\[
\|f-f_N\|_2^2 \leq \frac{1}{A}\sum_{n>N} |\theta(f)|_n^2,
\]
for any positive integer $N$. Thus, for the proof of Theorem \ref{theo:main}, it suffices to show that
\begin{equation}\label{eq:upper}
\sum_{n>N} |\theta(f)|_n^2 \leq C \cdot N^{-2} \cdot  {(\log{N})}^3 \qquad \text{as } N \rightarrow \infty.
\end{equation}

To derive the anticipated estimate in \eqref{eq:upper}, for any shearlet
$\psi_{\lambda}$, we will study two separate cases:
\bitem
\item {\em Case 1}. The compact support of the shearlet $\psi_{\lambda}$ does not intersect the boundary of the set $B$, i.e.,
$\wang{\intt(\Sp(\psi_{\lambda}))} \cap \partial B = \emptyset.$
\item {\em Case 2}. The compact support of the shearlet $\psi_{\lambda}$ does intersect the boundary of the set $B$, i.e.,
$\wang{\intt(\Sp(\psi_{\lambda}))} \cap \partial B \ne \emptyset.$
\eitem
Notice that this exact distinction is only possible due to the spatial compact support of all shearlets
in the shearlet frame.

In the sequel -- since we are concerned with an asymptotic estimate -- for simplicity we will often simply use
$C$ as a constant although it might differ for each estimate. Also all the results in the sequel are independent
on the sampling constant $c>0$, wherefore we now fix it once and for all.


\subsection{The Smooth and the Non-Smooth Part of a Cartoon-Like Image}

To illustrate which conditions on $\psi$ required by Theorem \ref{theo:main} are utilized
for the decay estimates of the different cases, in this section we do not make any initial
assumptions on $\psi$.

Let us start with the smooth part, which is the easier one to handle. Dealing with this part
allows us to consider some $g \in C^2(\RR^2)$ with compact support in $[0,1]^2$ and estimate its shearlet coefficients. This
is done in the following proposition. Notice that the hypothesis on $\psi$ of the following result
is implied by condition (i) in Theorem \ref{theo:main}.

\begin{proposition}
\label{prop:main1}
Let $g \in C^2(\RR^2)$ with compact support in $[0,1]^2$, and let $\psi \in L^2(\RR^2)$ be compactly supported and satisfy
\[
|\hat\psi(\xi)| \le C_1 \cdot \min(1,|\xi_1|^{\alpha}) \cdot \min(1,|\xi_1|^{-\gamma}) \cdot \min(1,|\xi_2|^{-\gamma})
\mbox{ for all }\xi = (\xi_1,\xi_2) \in \RR^2 \hspace*{-0.17cm},
\]
where $\gamma>3$, $\alpha > \gamma+2$, and $C_1$ is a constant. Then, there exists
some $C>0$ such that
\[
\sum_{n>N} |\theta(g)|_n^2 \le C \cdot N^{-2} \qquad \text{as } N \rightarrow \infty.
\]
\end{proposition}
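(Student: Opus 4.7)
The plan is to establish the rate by showing the sorted shearlet coefficients lie in weak $\ell^{2/3}$; indeed $|\theta(g)|_n\le C\,n^{-3/2}$ immediately yields $\sum_{n>N}|\theta(g)|_n^2\le C\,N^{-2}$. The weak-type bound will be obtained by combining two ingredients at each scale $j$: a counting bound on the relevant shearlets and an $\ell^2$-energy bound on their coefficients. Crucially, no sharp pointwise bound on individual coefficients is needed, which is fortunate because $g$ is only of class $C^2$.

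For the counting, since $g$ is supported in $[0,1]^2$ and each shearlet $\psi_\lambda$, $\lambda=(j,k,m)\in\Lambda_j$, has compact support in a parallelogram of dimensions of order $2^{-j}\times 2^{-j/2}$, the number of $\lambda\in\Lambda_j$ with $\supp(\psi_\lambda)\cap[0,1]^2\ne\emptyset$ is at most $C\cdot 2^{2j}$: there are $\sim 2^{j/2}$ shears $k$, and for each of them the translation lattice $cA_{2^j}^{-1}S_k^{-1}\ZZ^2$ contributes $\sim 2^{3j/2}$ positions whose support meets $[0,1]^2$.

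For the scale-wise energy, I aim to prove
\[
\sum_{\lambda\in\Lambda_j}|\langle g,\psi_\lambda\rangle|^2 \;\le\; C\cdot 2^{-4j}.
\]
On the $g$-side, $g\in C^2$ with compact support gives $\Delta g\in L^2(\RR^2)$, so $\int|\hat g(\xi)|^2(1+|\xi|^2)^2\,d\xi<\infty$ and in particular $\int_{\{|\xi|\ge 2^j\}}|\hat g|^2\,d\xi\le C\cdot 2^{-4j}$. On the $\psi$-side, for each fixed $(j,k)$ the family $\{\psi_{j,k,m}\}_m$ is a Bessel sequence with bound uniform in $(j,k)$ (Poisson summation on the lattice $cA_{2^j}^{-1}S_k^{-1}\ZZ^2$, using $\gamma>3$ to make $\sup_\eta\sum_n|\hat\psi(\eta+n/c)|^2$ finite), and $\hat\psi_{j,k,m}$ is essentially concentrated in a parallelogram $P_{j,k}\subset\{|\xi|\sim 2^j\}$ whose translates across $k$ cover the annulus with bounded overlap. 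A careful use of Poisson summation together with the frequency localization of $\hat\psi$ then yields
\[
\sum_m|\langle g,\psi_{j,k,m}\rangle|^2\;\le\; C\int_{P_{j,k}}|\hat g(\xi)|^2\,d\xi \;+\; (\text{rapidly decaying tails}),
\]
and summing in $k$ gives the claimed $C\cdot 2^{-4j}$.

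Combining the two estimates, for any $\epsilon>0$ Chebyshev gives $\#\{\lambda\in\Lambda_j : |\langle g,\psi_\lambda\rangle|>\epsilon\}\le C\cdot 2^{-4j}/\epsilon^2$, so together with the counting bound,
\[
\#\{\lambda : |\langle g,\psi_\lambda\rangle|>\epsilon\}\;\le\;\sum_{j\ge 0}\min\bigl(C\cdot 2^{2j},\; C\cdot 2^{-4j}/\epsilon^2\bigr)\;\le\; C\cdot\epsilon^{-2/3},
\]
the two geometric series meeting at $j_\ast\sim\tfrac{1}{3}\log_2(1/\epsilon)$ and each contributing $O(\epsilon^{-2/3})$. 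Equivalently $|\theta(g)|_n\le C\,n^{-3/2}$, which delivers the claim. The main obstacle is the rigorous derivation of the scale-wise energy bound: since $\psi$ is compactly supported in space, $\hat\psi$ has only polynomial decay in frequency, so replacing the essential by the actual frequency support requires carefully controlled Poisson-summation tail estimates, which is exactly where the hypotheses $\gamma>3$ and $\alpha>\gamma+2$ are used.
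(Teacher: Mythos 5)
Your outline is correct and reaches the right conclusion, but it derives the key scale-wise estimate by a different device than the paper, and it is precisely that step which you leave unfinished. Structurally both arguments are the same: a count of $O(2^{2j})$ active indices per scale, an energy bound $\sum_{\lambda\in\Lambda_j}|\langle g,\psi_\lambda\rangle|^2\le C\cdot 2^{-4j}$, and a standard combination (you use Chebyshev plus weak-$\ell^{2/3}$; the paper uses a rearrangement argument with $N_j\sim 2^{2j}$ — these are interchangeable). The difference is how the $2^{-4j}$ energy decay is obtained. You propose a direct frequency-side Littlewood--Paley argument: localize $\hat\psi_{j,k,m}$ near $|\xi|\sim 2^j$, use $\int_{|\xi|\ge 2^j}|\hat g|^2\le C2^{-4j}$, and control the leakage by a weighted Poisson-summation estimate — which you yourself flag as ``the main obstacle'' and do not carry out. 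This is a genuine piece of work, not a routine tail estimate: since $\hat\psi$ decays only polynomially you must verify that the almost-orthogonality sums converge with the weight $(1+|\xi|^2)^2$ inserted, and that the low-frequency leakage (where $\hat g$ is $O(1)$) is killed by the $|\xi_1|^\alpha$ factor. The paper avoids redoing this computation by a cleaner trick: choose $\eta$ with $\partial_{x_1}^2\eta=\psi$, i.e.\ $\hat\eta(\xi)=-\hat\psi(\xi)/(4\pi^2\xi_1^2)$. The hypothesis $\alpha>\gamma+2$ guarantees that $\eta$ satisfies the \emph{unweighted} Bessel hypotheses (with $\alpha-2>\gamma>3$), so the known bound $\sum|\langle\cdot,\eta_{j,k,m}\rangle|^2\le C\|\cdot\|_2^2$ applies to $\eta$; integration by parts gives $2^{4j}|\langle g,\psi_{j,k,m}\rangle|^2=|\langle\partial_{x_1}^2 g,\eta_{j,k,m}\rangle|^2$, and the weighted estimate $\sum_{j,k,m}2^{4j}|\langle g,\psi_{j,k,m}\rangle|^2\le C\|\partial_{x_1}^2 g\|_2^2$ drops out immediately. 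In short: your route would work, but to make it rigorous you would essentially have to reprove a weighted version of the Bessel bound; the antiderivative substitution reduces this to the already-available unweighted one and makes transparent exactly where $\alpha>\gamma+2$ enters. I recommend you either adopt that device or write out the weighted Poisson-summation estimate in full, since as it stands the central inequality of your argument is asserted rather than proved.
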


Thus, in this case, optimal sparsity is achieved. The proof of this proposition is given in
Section \ref{sec:smoothpart}.

Next, we turn our attention to the non-smooth part, in particular, to estimating those shearlet
coefficients whose spatial support intersects the discontinuity curve. For this, we first
need to introduce some new notations. For any scale $j \ge 0$ and any grid point $p \in \ZZ^2$, we
let $\cQ_{j,p}$ denote the dyadic cube defined by
\[
\cQ_{j,p} = [-2^{-j/2},2^{-j/2}]^2+2^{-j/2}p.
\]
Further, let $\cQ_j$ be the collection of those dyadic cubes $\cQ_{j,p}$ whose interior, in the following denoted by
$\intt(\cQ_{j,p})$, intersects $\partial B$, i.e.,
\[
\cQ_j = \{\cQ_{j,p}: \intt(\cQ_{j,p}) \cap \partial B \ne \emptyset, p \in \ZZ^2\}.
\]
Of interest to us is also the set of shearlet indices, which are associated with shearlets intersecting
the discontinuity curve inside some $\cQ_{j,p}  \in \cQ_{j}$, i.e., for $j\ge0$ and $p \in \bZ^2$ with $\cQ_{j,p} \in \cQ_{j}$,
we will consider the index set
\[
{\Lambda_{j,p}} = \{\lambda \in \Lambda_j : \wqq{\intt(\Sp(\psi_{\lambda})) \cap \intt(\cQ_{j,p})} \cap \partial B \ne \emptyset\}.
\]
Finally, for $j \ge 0$, $p \in \ZZ^2$, and $0 < \eps < 1$, we define $\Lambda_{j,p}(\eps)$ to be \wq{the index set of} shearlets
$\psi_\lambda$, $\lambda \in {\Lambda_{j,p}}$, such that the magnitude of the corresponding shearlet
coefficient $\langle f,\psi_\lambda \rangle$ is larger than $\eps$ and the support of $\psi_\lambda$
intersects $\cQ_{j,p}$ at the $j$th scale, i.e.,
\[
{\Lambda_{j,p}(\eps)} = \{\lambda \in {\Lambda_{j,p}} : |\langle f,\psi_{\lambda}\rangle| > \eps\},
\]
and we define {$\Lambda(\eps)$} to be the \wq{index set for} shearlets so that $|\langle f,\psi_{\lambda} \rangle| > \eps$
across all scales $j$, i.e.,
\[
\Lambda(\eps) = \bigcup_{j,p} \Lambda_{j,p}(\eps).
\]
The expert reader will have noticed that in contrast to the proofs in \cite{CD04} and \cite{GL07},
which also split the domain into smaller scale boxes, we do not apply a weight function to
obtain a smooth partition of unity. In our case, this is not necessary due to the spatial compact
support of the frame elements.

As mentioned at the beginning of this section, we may assume that $j$ is sufficiently large. Given
some scale $j \ge 0$ and position $p \in \ZZ^2$ for which the associated cube $\cQ_{j,p}$
satisfies $\cQ_{j,p} \in \cQ_j$. Then the set
\[
\cS_{j,p}= \bigcup_{\lambda \in \Lambda_{j,p}} \Sp(\psi_\lambda)
\]
is contained in a cubic window of size $C\cdot 2^{-j/2}$ by $C\cdot 2^{-j/2}$, hence is of asymptotically
the same size as $\cQ_{j,p}$. By smoothness assumption on the discontinuity curve $\partial B$, the edge curve can be parameterized by either
$(x_1,E(x_1))$ or $(x_2,E(x_2))$ with $E \in C^2$ in the interior of $\cS_{j,p}$ for sufficiently large $j$.\footnote{In other words, a part of the
edge curve $\partial B$ contained in $\cS_{j,p}$ can be described as a $C^2$ function $x_1 = E(x_2)$ ( or $x_2 = E(x_1)$).}

Thus, we are facing the following two cases (see also Figure \ref{fig:cases2ab}):
\begin{itemize}
\item {\em Case 2a}. The edge curve $\partial B$ can be parameterized \wql{by either $(E(x_2),x_2)$ or  $(x_1,E(x_1))$ with $E \in C^2$
in the interior of $\cS_{j,p}$ such that, for any $\lambda \in \Lambda_{j,p}$, there exists some $\hat{x} = (\hat{x}_1,\hat{x}_2)
\in \wqq{\intt(\cQ_{j,p}) \cap \intt(\Sp(\psi_{\lambda}))} \cap \partial B$ satisfying either $|E'(\hat{x}_2)| \leq 2 $ or $|E'(\hat{x}_1)|^{-1} \leq 2$.}
\item {\em Case 2b}. The edge curve $\partial B$ can be parameterized \wql{by either $(E(x_2),x_2)$ or  $(x_1,E(x_1))$ with $E \in C^2$
in the interior of $\cS_{j,p}$ such that, for any $\lambda \in \Lambda_{j,p}$, there exists some $\hat{x} = (\hat{x}_1,\hat{x}_2)
\in \wqq{\intt(\cQ_{j,p}) \cap \intt(\Sp(\psi_{\lambda}))} \cap \partial B$ satisfying either $|E'(\hat{x}_2)| > 2 $ or $|E'(\hat{x}_1)|^{-1} > 2$.}
Here, we identify $E'(\hat x_1) = 0$ with $|E'(\hat x_1)|^{-1} = \infty > 2$.
\end{itemize}
\begin{figure}[ht]
\begin{center}
\includegraphics[height = 1in]{./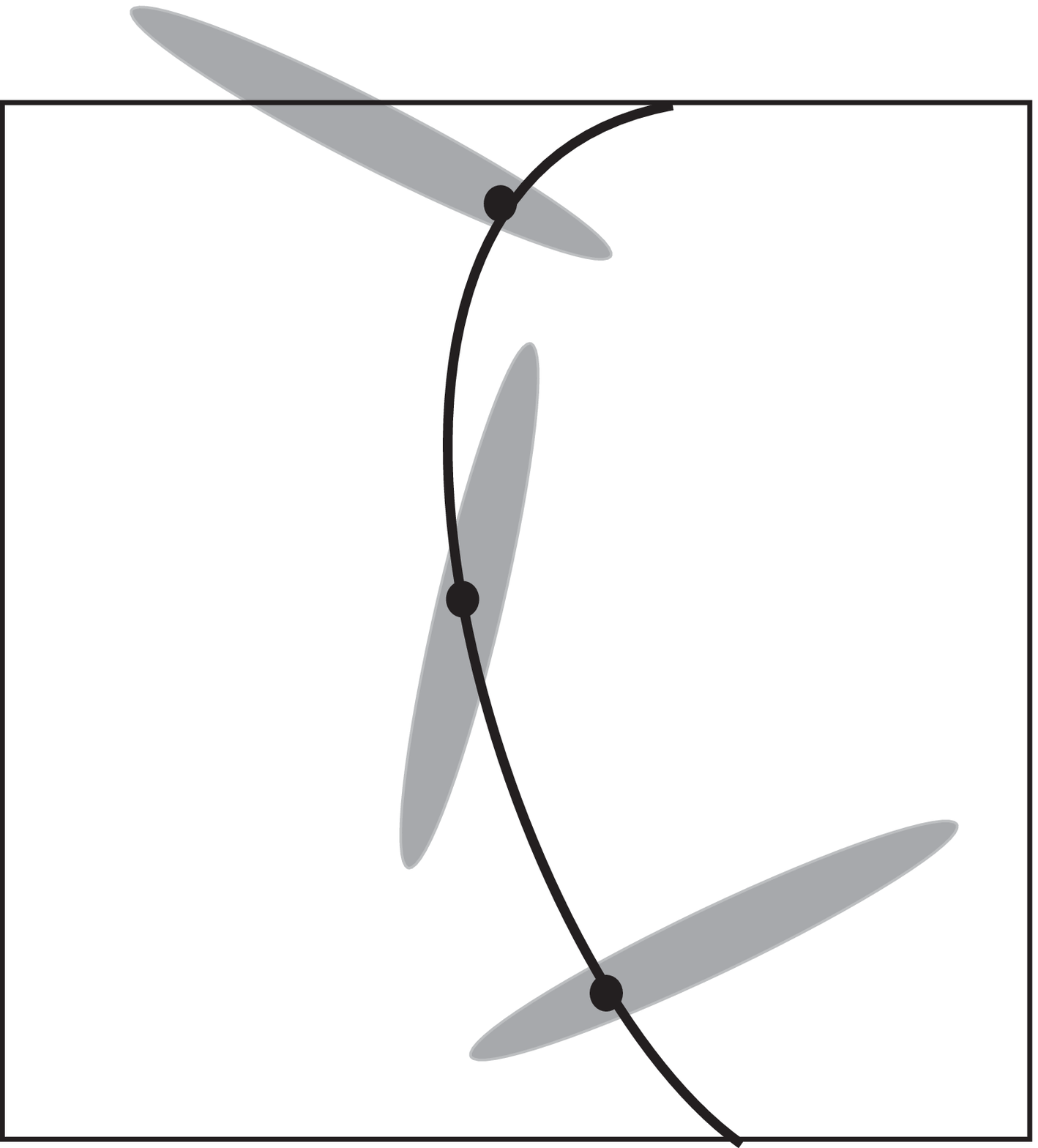}
\put(-40,-15){\footnotesize{(a)}}
\hspace*{4cm}
\includegraphics[height = 1in]{./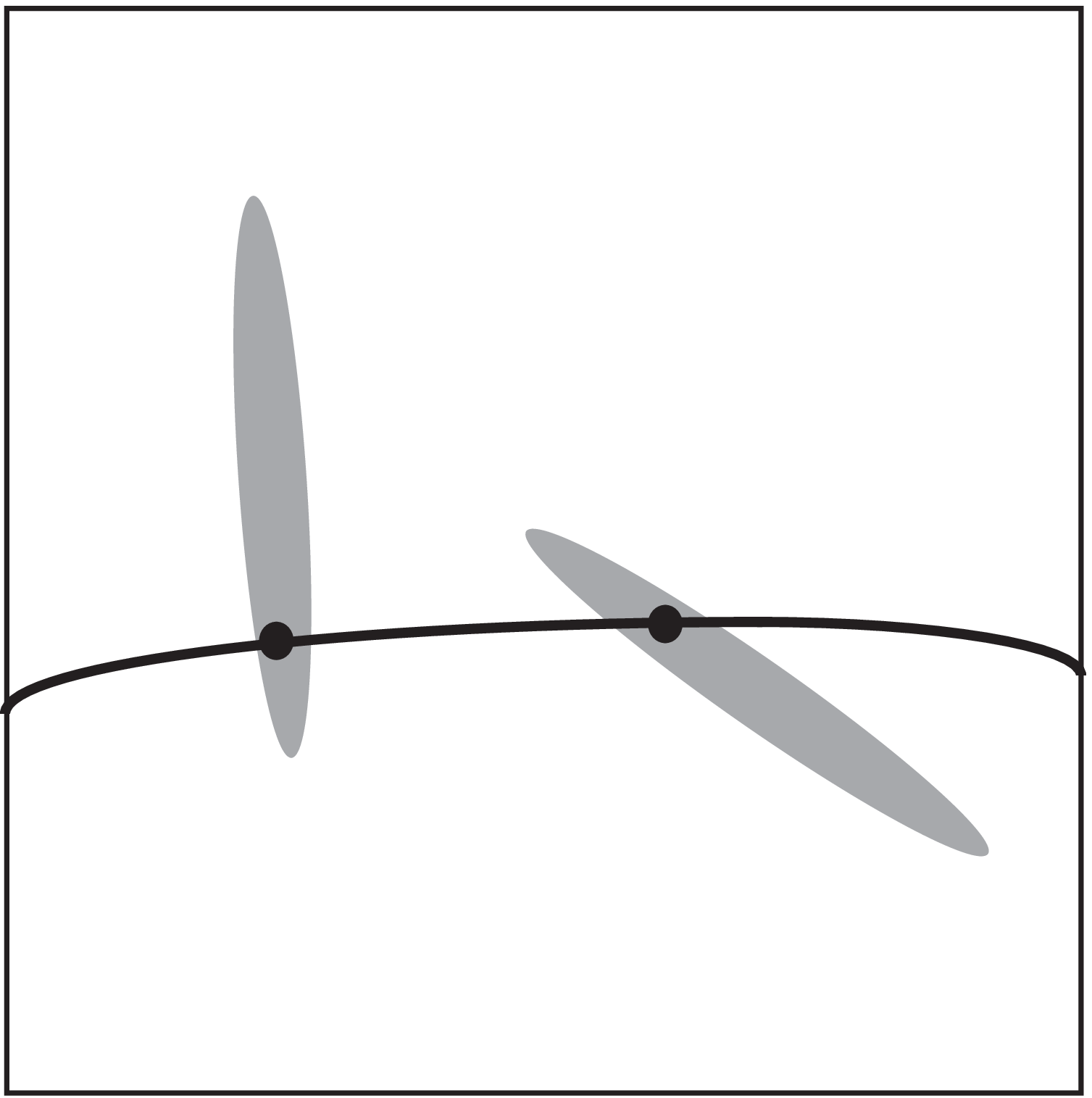}
\put(-40,-15){\footnotesize{(b)}}
\end{center}
\caption{(a) A part of the curve $\partial B$ satisfying Case 2a.
(b) A part of the curve $\partial B$ satisfying Case 2b.}
\label{fig:cases2ab}
\end{figure}
For both cases, we will derive the below stated upper estimates \eqref{eq:estimate1} and \eqref{eq:estimate2} for the absolute value of
 the associated shearlet coefficients. The proofs of these estimates are contained in Section \ref{sec:discontinuity}.
\begin{proposition}\label{prop:main2}
Let $\psi \in L^2(\RR^2)$  be compactly supported, and assume that $\psi$ satisfies conditions (i) and (ii) of Theorem \ref{theo:main}.
Further, let $j \ge 0$ and $p \in \ZZ^2$, and let $\lambda \in \Lambda_{j,p}$. For fixed $\hat{x}=(\hat{x}_1,\hat{x}_2) \in
\wqq{\intt(\cQ_{j,p}) \cap \intt(\Sp(\psi_{\lambda}))} \cap \partial B$,
let $s$ be the slope\footnote{Notice that here we regard the slope of the tangent to a curve
$(E(x_2),x_2)$, i.e., we consider $s$ of a curve indexed by the $x_2$-axis, for instance, by $x_1 = sx_2+b$. For analyzing shearlets
$\tilde{\psi}_{j,k,m}$, the roles of $x_1$ and $x_2$ would need to be reversed.} of the tangent to the edge curve $\partial B$
at $(\hat{x}_1,\hat{x}_2)$, more precisely,
\begin{enumerate}
\item[{\rm (i)}] if $\partial B$ is parameterized by $(E(x_2),x_2)$ with $E \in C^2$ in the interior of $\cS_{j,p}$, then $s = E'(\hat{x}_2)$,
\item[{\rm (ii)}] if $\partial B$ is parameterized by $(x_1,E(x_1))$ with $E \in C^2$ and $E'(\hat{x}_1) \neq 0$ in the interior of $\cS_{j,p}$,
then $s = ({E'(\hat{x}_1)})^{-1}$, and
\item[{\rm (iii)}] if $\partial B$ is parameterized by $(x_1,E(x_1))$ with $E \in C^2$ and $E'(\hat{x}_1) = 0$ in the interior of $\cS_{j,p}$, then
$s = \infty$.
\end{enumerate}
Then there exists some $C > 0$ such that
\begin{equation}\label{eq:estimate1}
|\langle f,\psi_{\lambda}\rangle| \leq C \cdot \frac{2^{-\frac{3}{4}j}}{|\wq{k+}2^{j/2}s|^3},  \qquad \text{if }|s| \leq \wq{3}.
\end{equation}
and
\begin{equation}\label{eq:estimate2}
|\langle f,\psi_{\lambda}\rangle| \leq C \cdot 2^{-\frac{9}{4}j}, \qquad \text{if }|s| > \wq{\frac{3}{2}} \text{ or } s = \infty,
\end{equation}
\end{proposition}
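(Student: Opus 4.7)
The plan is to transport the problem into the shearlet's natural coordinates $y=S_kA_{2^j}x-cm$, use the moment condition hidden in (i) to integrate out $y_1$ against a compactly supported antiderivative of $\psi$, and then extract the decay factor $|k+2^{j/2}s|^{-3}$ via a Plancherel identity.

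\textbf{Change of variables and local shape of the curve.} The first step would be to substitute $y=S_kA_{2^j}x-cm$, producing the prefactor $2^{-3j/4}$ and reducing the coefficient to $2^{-3j/4}\int f(A_{2^j}^{-1}S_k^{-1}(y+cm))\,\overline{\psi(y)}\,dy$. Splitting $f=f_0+f_1\chi_B$ and Taylor-expanding the $C^2$ pieces at the anchor point $\hat x$, the fully smooth contributions fall into the Proposition~\ref{prop:main1} regime and contribute only at a lower order; what remains is essentially $2^{-3j/4}\int_{y_1\le G(y_2)}\overline{\psi(y)}\,dy$, where the local parameterization $x_1=E(x_2)$ transports to $y_1=G(y_2)$ with $G(y_2)=ky_2+2^jE(2^{-j/2}(y_2+cm_2))+\mathrm{const}$. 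Direct differentiation gives $G'(\hat y_2)=k+2^{j/2}s=:a$ and $\|G''\|_\infty\le\nu$, so on the compact $y_2$-support of $\psi$ one has the Taylor decomposition $G(y_2)=ay_2+b+R(y_2)$ with $|R|,|R'|=O(1)$.

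\textbf{Antiderivative and Plancherel for the linear part.} Condition~(i) forces $\hat\psi(\xi_1,\xi_2)=O(|\xi_1|^\alpha)$ as $\xi_1\to 0$ and hence $\hat\psi(0,\xi_2)\equiv 0$; consequently $\int_{\RR}\psi(t,y_2)\,dt=0$ for a.e.\ $y_2$, so the partial antiderivative $\psi_{-1}(y_1,y_2):=\int_{-\infty}^{y_1}\psi(t,y_2)\,dt$ inherits compact support. Fubini then reduces the jump integral to $I:=\int\psi_{-1}(G(y_2),y_2)\,dy_2$. For the linearized curve $G(y_2)\approx ay_2+b$, expressing $\psi_{-1}$ via its $2$-D Fourier inverse and carrying out the $y_2$-integration produces a $\delta(\xi_2+a\xi_1)$, collapsing the expression to
\[
I_{\mathrm{lin}}=\int_{\RR}\frac{\hat\psi(\xi_1,-a\xi_1)}{2\pi i\,\xi_1}\,e^{2\pi i\,\xi_1 b}\,d\xi_1.
\]
Splitting the $\xi_1$-axis at $|\xi_1|=1/|a|$ and $|\xi_1|=1$ and applying condition~(i) on each piece (using $\alpha>\gamma$, ensured by $\alpha>5$ and $\gamma\ge 4$) yields $|I_{\mathrm{lin}}|\le C|a|^{-\gamma}\le C|a|^{-3}$.

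\textbf{Nonlinear correction.} The remainder $I-I_{\mathrm{lin}}=\int\int_{ay_2+b}^{G(y_2)}\psi(t,y_2)\,dt\,dy_2$ is the real difficulty: $R(y_2)$ is merely $O(1)$ in absolute terms, not small. Here condition~(ii) enters essentially: the weighted $\xi_2$-derivative bound dualizes via Fourier inversion into pointwise $y_2$-decay of $\psi(y_1,y_2)$; combining this with the compact $y_1$-support of $\psi_{-1}$ (which forces the effective $y_2$-window to size $O(1/|a|)$) and the uniform control of $R$ retains the $O(|a|^{-3})$ rate. Multiplying by the $2^{-3j/4}$ prefactor gives \eqref{eq:estimate1}.

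\textbf{Case 2b.} For $|s|>3/2$, every admissible $k$ with $|k|\le\lceil 2^{j/2}\rceil$ satisfies $|a|=|k+2^{j/2}s|\ge 2^{j/2}(|s|-1)\ge 2^{j/2}/2$, so inserting into \eqref{eq:estimate1} yields $|\langle f,\psi_\lambda\rangle|\le C\cdot 2^{-3j/4}\cdot(2^{j/2})^{-3}=C\cdot 2^{-9j/4}$, which is \eqref{eq:estimate2}. For the tangential case $s=\infty$, the curve is parameterized as $y_2=F(y_1)$ with $F'(\hat y_1)=0$; the $y_2$-support of $\psi$ now plays the role of the narrow window, and the analogous argument using the $\xi_2$-decay from (i) and the $\xi_2$-derivative bound from (ii) delivers the same $2^{-9j/4}$ estimate. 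I expect the main obstacle to be the nonlinear correction: because $R$ is not small, extracting the full $|a|^{-3}$ decay requires precisely the combination of the $y_2$-decay of $\psi$ provided by~(ii) with the $O(1/|a|)$ restriction of the $y_2$-window, and this subtle tradeoff is what forces the quantitative hypotheses $\alpha>5$ and $\gamma\ge 4$.
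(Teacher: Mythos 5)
Your skeleton matches the paper's: split off the sliver between the edge and its tangent (your ``nonlinear correction'', which is the paper's parallelogram $\mathcal{P}$ in its Claim 1), and treat the straight-edge jump by converting the vanishing moment $\hat\psi(0,\xi_2)=0$ into a Fourier-side bound along the frequency line $\xi_2=-a\xi_1$, $a=k+2^{j/2}s$ (your $I_{\mathrm{lin}}$ is in substance the paper's estimate of $I_1$ via $\hat\psi_{j,\hat k,0}(\xi_1,0)$); your Case 2b reduction from \eqref{eq:estimate1} to \eqref{eq:estimate2} is also fine. The first genuine problem is the reduction of the integrand to a \emph{constant} times $\chi_{\{y_1\le G(y_2)\}}$. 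The densities $f_0$ and $f_1$ are only $C^2$, and their variation across $\supp\psi_\lambda$ (of size $2^{-j/2}$ in $x_2$) produces terms that a crude bound controls only by $O(2^{-5j/4})$ --- far above the claimed $2^{-3j/4}|a|^{-3}\sim 2^{-9j/4}$ when $|a|\sim 2^{j/2}$. Your appeal to Proposition \ref{prop:main1} cannot close this: that proposition bounds a tail sum of sorted coefficients, not an individual coefficient. This is exactly the term for which the paper Taylor-expands the density transversally to the sheared support (displacement $2^{-j/2}/|\hat{k}|$, not $2^{-j/2}$) and invokes condition (ii) to bound the first transverse moment $\int x_2\,\psi_{j,\hat{k},0}(x_1,x_2)\,dx_2$ by $C\,2^{-j/4}|\hat{k}|^{-\gamma}$ (the $I_2$/$S_1$ estimate); in your write-up this entire piece of work is absent.

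Second, you assign condition (ii) to the nonlinear correction, where it is neither needed nor usable in the way you describe: ``pointwise $y_2$-decay of $\psi$'' is vacuous for a compactly supported generator. The correct argument there is elementary: on the set of $y_2$ where either $G(y_2)$ or $ay_2+b$ lies in the $y_1$-support (an interval of length $O(1/|a|)$ around $\hat{y}_2$, since $|G'|\ge |a|-\nu$), the curvature bound gives $|R(y_2)|\le C|a|^{-2}$, whence $|I-I_{\mathrm{lin}}|\le \|\psi\|_\infty\cdot C|a|^{-2}\cdot C|a|^{-1}$ --- precisely the paper's Claim 1 area estimate, using no Fourier hypothesis at all. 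So the step you flag as the crux (and as the reason for $\alpha>5$, $\gamma\ge4$) is the easy one, while the step that actually consumes condition (ii) in the paper --- the first-order variation of the $C^2$ densities across the support --- has been dropped. As written, condition (ii) is never correctly used, and the estimates \eqref{eq:estimate1} and \eqref{eq:estimate2} are established only for a constant multiple of $\chi_B$, not for a general $f\in\cE^2(\nu)$.
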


Notice that in Case 2a, condition (i) or (ii) can occur, whereas in Case 2b, all three conditions can occur.


\subsection{Proof of Theorem \ref{theo:main}}

Let $f \in \cE^2(\nu)$. We first observe that, by Proposition \ref{prop:main1}, we can neglect those shearlet coefficients
whose spatial support of the associated shearlet does not intersect the discontinuity curve.

To estimate the remaining shearlet coefficients, we need to analyze their decay properties.
For this, let $j \ge 0$ be sufficiently large and let $p \in \ZZ^2$, be such that the associated cube
satisfies $\cQ_{j,p} \in \cQ_j$. We note that all sets $\wqq{\intt(\Sp{(\psi_{\lambda})})}$ with $\lambda \in \Lambda_{j,p}$
are contained in the interior of ${\mathcal{S}}_{j,p}$. Therefore weights as in \cite{CD04} and  \cite{GL07} are
not required here.

Letting $\eps > 0$, our goal will now be to estimate first $|\Lambda_{j,p}(\eps)|$ and then $|\Lambda(\eps)|$.
WLOG we might assume $\|\psi\|_1 \le 1$, which implies
\[
|\langle f,\psi_{\lambda}\rangle| \le 2^{-3j/4}.
\]
Hence, for estimating $|\Lambda_{j,p}(\eps)|$, it is sufficient to restrict our attention to scales
\beq \label{eq:upperbdonj}
j \le \frac{4}{3}\log_2(\eps^{-1}).
\eeq

We will now deal with Case 2a and Case 2b separately.

\vspace*{0.2cm}

{\em Case 2a}. First, we let $s$ be the slope of the tangent to the edge curve $\partial B$
at $(\hat{x}_1,\hat{x}_2)$ as defined in Proposition \ref{prop:main2}, i.e.,
if  $\partial B$ is parameterized by $(E(x_2),x_2)$ in the interior of $\cS_{j,p}$, then $s = E'(\hat x_2)$,
and if  $\partial B$ is parameterized by $(x_1,E(x_1))$ in the interior of $\cS_{j,p}$, then $s = E'(\hat x_1)^{-1}$.

By assumption of Case 2a, we have $s \in [-2,2]$. Now observe that, for each shear index $k$,
\begin{equation}\label{eq:number}
|\{\lambda=(j,k,m) : \lambda \in \Lambda_{j,p}\}| \le C \cdot (|k+2^{j/2}s|+1).
\end{equation}
Interestingly, this estimate is independent of the choice of the point $\hat x \in \wqq{\intt(\cQ_{j,p}) \cap \intt(\Sp(\psi_{\lambda}))} \cap \partial B$, which
can be seen as follows: Let $\hat x \in \wqq{\intt(\cQ_{j,p}) \cap \intt(\Sp(\psi_{\lambda}))} \cap \partial B, \wang{\hat x' \in \wqq{\intt(\cQ_{j,p}) \cap \intt(\Sp(\psi_{\lambda'}))} \cap \partial B}$, and let $s$ and $s'$ be the
associated slopes of the tangents to the edge curve $E$ in $\hat x$ and $\hat x'$, respectively. Since $E \in C^2$,
\beq \label{eq:ss'}
|s-s'| \le C_1 \cdot 2^{-j/2},
\eeq
and hence
\begin{equation*}
|k+2^{j/2}s'| \leq C \cdot (|k+2^{j/2}s|+1).
\end{equation*}
This proves that the estimate \eqref{eq:number} remains the same asymptotically, independent of the values of $s$ and $s'$,
and hence of $\hat x$ and $\hat x'$.

We further require the following even stronger observation: Fix some $\hat x \in \wqq{\intt(\cQ_{j,p}) \cap \intt(\Sp(\psi_{\lambda}))} \cap \partial B$,
and let $s$ be the associated slope of the tangent to the edge curve $E$ in $\hat x$. Now consider another $\hat x' \in \wqq{\intt(\cQ_{j,p})
\cap \wang{\intt(\Sp(\psi_{\lambda'}))}} \cap \partial B$, and again let $s'$ be the associated slope of the tangent to the edge curve $E$ in $\hat x'$.
Then, for sufficiently \wq{large} scaling index $j$, by \eqref{eq:ss'}, $|s-s'|$ is sufficiently small \wq{and} we may assume $s' \in [-3,3]$.
Hence the estimate \eqref{eq:estimate1} from Proposition \ref{prop:main2} holds not only for $\hat x$ (with $s$), but also for $\hat x'$
(with $s'$). In fact, it can even be checked that by substituting \wq{$s'$} by \wq{$s$} in \eqref{eq:estimate1} the
asymptotic behavior of the estimate for $|\Lambda_{j,p}(\epsilon)|$ does not change. Let us briefly outline the reasoning. First, we observe that
WLOG we can assume that $|k+2^{j/2}s| \ge 2 \cdot C_1$, where $C_1$ is the constant appearing in \eqref{eq:ss'}, since
$|\{k \in \Z : |k+2^{j/2}s| < 2 \cdot C_1\}| \leq C$ with $C$ being independent on $j$ and hence it can be deduced that the hypothesis $|k+2^{j/2}s| \ge 2 \cdot C_1$
for each $j \ge 0$ does not affect our asymptotic estimate of $|\Lambda_{j,p}(\epsilon)|$. From \eqref{eq:ss'}, it then follows that
\[
|k+2^{j/2}s| \leq 2 \cdot |k+2^{j/2}s'|,
\]
which in turn implies
\[
\frac{2^{-\frac{3}{4}j}}{|k+2^{j/2}s'|^3} \leq 8\frac{2^{-\frac{3}{4}j}}{|k+2^{j/2}s|^3}.
\]
Hence, by substituting \wq{$s'$} by \wq{$s$} in \eqref{eq:estimate1} the asymptotic behavior of the estimate for $|\Lambda_{j,p}(\epsilon)|$ does not change.
Thus, it suffices to consider just one fixed $\hat x \in \wqq{\intt(\cQ_{j,p}) \cap \intt(\Sp(\psi_{\lambda}))} \cap \partial B$ with associated slope $s$
in each $\cQ_{j,p}$. We now turn to estimating $|\Lambda_{j,p}(\eps)|$ in this case.

For this, by estimate \eqref{eq:estimate1} from Proposition \ref{prop:main2}, $|\langle f,\psi_{\lambda}\rangle|>\eps$ implies
\begin{equation}\label{eq:number2}
|k+2^{j/2}s| \le C\cdot \eps^{-1/3}\cdot 2^{-j/4}.
\end{equation}
From \eqref{eq:number} and \eqref{eq:number2}, we then conclude
\begin{equation}\label{eq:case1}
|\Lambda_{j,p}(\eps)| \le C \cdot \sum_{k \in K_j(\eps)}(|k+2^{j/2}s|+1) \le C \cdot (\eps^{-1/3}\cdot 2^{-j/4}+1)^2,
\end{equation}
where $K_j(\eps) = \{k\in\bZ:|k+2^{j/2}s|\le C \cdot \eps^{-1/3} \cdot 2^{-j/4}\}$.

\vspace*{0.2cm}

{\em Case 2b}. Exploiting similar arguments as in Case 2a, it also suffices to consider just one fixed $\hat x \in \wqq{\intt(\cQ_{j,p}) \cap
\intt(\Sp(\psi_{\lambda}))} \cap \partial B$ with associated slope $s$ in each \wqq{$\intt(\cQ_{j,p})$}. Again, our goal is now to estimate $|\Lambda_{j,p}(\eps)|$.

By estimate \eqref{eq:estimate2} from Proposition \ref{prop:main2}, $|\langle f,\psi_{\lambda}\rangle|>\eps$ implies
\[
C \cdot 2^{-\frac 94 j} \ge \eps,
\]
hence
\begin{equation}\label{eq:case2}
\quad \text{and} \quad j \leq \frac 49 \log_2{(\eps^{-1})}+C.
\end{equation}
Since there exists some $C$ with
\[
|\Lambda_{j,p}| \le C \cdot 2^j,
\]
it then follows that
\beq\label{eq:estimateLambda_jp}
|\Lambda_{j,p}(\eps)| \le C \cdot 2^{j}.
\eeq
Notice that this last estimate is exceptionally crude, but will be sufficient for our purposes.

\vspace*{0.2cm}

We now combine the estimates for $|\Lambda_{j,p}(\eps)|$ derived in Case 2a and Case 2b. Since
\[
\#(\cQ_j) \leq C\cdot 2^{j/2}.
\]
by \eqref{eq:case1} (and \eqref{eq:upperbdonj}) and by \eqref{eq:case2} (and \eqref{eq:estimateLambda_jp}), we have
\begin{eqnarray}\label{eq:totalnumber} \nonumber
|\Lambda(\eps)| &\leq& C \cdot \Bigl[ \sum_{j=0}^{\frac 43 \log_2(\eps^{-1})} 2^{j/2}\Bigl(\eps^{-1/3}\cdot 2^{-1/4j}+1\Bigr)^2
+ \hspace*{-0.2cm}\sum_{j=0}^{\frac 49 \log_2(\eps^{-1})+C}2^{\frac 32j}\Bigr]\\
&\leq& C \cdot \eps^{-2/3} \cdot \log_2(\eps^{-1}).
\end{eqnarray}

Having estimated $|\Lambda(\eps)|$, we are now ready to prove our main claim. For this, set $N = |\Lambda(\eps)|$, i.e., the total
number of shearlets $\psi_{\lambda}$ such that the magnitude of the corresponding shearlet coefficient
$\langle f,\psi_{\lambda} \rangle$ is larger than $\eps$. By \eqref{eq:totalnumber}, the value $\eps$ can be written as a function
of the total number of coefficients $N$ in the following way:
\[
\eps(N) \le C \cdot N^{-3/2}\cdot (\log N)^{3/2}, \quad \text{for sufficiently large} \,\,N>0.
\]
This implies that
\[
|\theta(f)|_{N} \le C \cdot N^{-3/2}\cdot (\log N)^{3/2}.
\]
Hence,
\[
\sum_{n > N}|\theta(f)|_{n}^2 \le C\cdot N^{-2} \cdot (\log N)^3 \quad \text{for sufficiently large }\,\,N>0,
\]
which proves \eqref{eq:upper}.
The proof of Theorem \ref{theo:main} is finished.


\section{Analysis of Shearlet Coefficients associated with the Smooth Part of a Cartoon-Like Image}
\label{sec:smoothpart}

In this section, we will prove Proposition \ref{prop:main1}. For this, we first prove a
result which shows that, provided that the shearlet $\psi$ satisfies certain decay conditions
even with strong weights  such as $(2^{4j})_j$, the system $\Psi(c;\psi)$ forms a Bessel-like
sequence for $C^2(\RR^2)$ with compact support in $[0,1]^2$.

In the following we will use the notation $r_j \sim s_j$ for $r_j, s_j \in \RR$,
if $C_1 \cdot r_j \le s_j \le C_2 \cdot r_j$ with constants $C_1$ and $C_2$ independent on the
scale $j$.

\begin{lemma}\label{lemma:smoothpart}
Let $\psi \in L^2(\RR^2)$ satisfy
\[
|\hat\psi(\xi)| \le C_1 \cdot \min(1,|\xi_1|^{\alpha}) \cdot \min(1,|\xi_1|^{-\gamma}) \cdot \min(1,|\xi_2|^{-\gamma})
\mbox{ for all }\xi = (\xi_1,\xi_2) \in \RR^2 \hspace*{-0.17cm},
\]
where $\gamma>3$, $\alpha > \gamma+2$, and $C_1$ is some constant. Then, there exists some $C>0$ such that,
for all  $g \in C^2(\RR^2)$ with compact support in $[0,1]^2$,
\[
\sum_{j=0}^{\infty}\sum_{|k| \le \lceil 2^{j/2} \rceil}\sum_{m \in \bZ^2} 2^{4j}|\langle \wqqq{g},\psi_{j,k,m}\rangle|^2 \leq C\cdot
\wqqq{\left\|\frac{\partial^2}{\partial x_1^2} g\right\|_2^2}.
\]
\end{lemma}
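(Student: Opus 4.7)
My plan is to use Plancherel together with the high-order vanishing of $\hat\psi$ at $\xi_1=0$ to shift two derivatives in $x_1$ from the shearlet onto $g$. I introduce an auxiliary generator $\tilde\psi\in L^2(\RR^2)$ through its Fourier transform
\[
\hat{\tilde\psi}(\eta) = \frac{\hat\psi(\eta)}{(2\pi\eta_1)^2}.
\]
Since $\alpha>2$, the right-hand side is bounded near $\eta_1=0$, and the decay hypothesis on $\hat\psi$ transfers directly to
\[
|\hat{\tilde\psi}(\eta)| \le C\min(1,|\eta_1|^{\alpha-2})\min(1,|\eta_1|^{-\gamma-2})\min(1,|\eta_2|^{-\gamma}),
\]
which places $\tilde\psi$ in $L^2(\RR^2)$. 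Setting $M=S_kA_{2^j}$ and $\tilde\psi^\sharp_{j,k,m}(x)=2^{3j/4}\tilde\psi(Mx-cm)$, a direct Fourier calculation using $(M^{-T}\xi)_1=2^{-j}\xi_1$ yields $\hat{\tilde\psi}^\sharp_{j,k,m}(\xi) = 2^{2j}(2\pi\xi_1)^{-2}\hat\psi_{j,k,m}(\xi)$. Combining with Plancherel and $\widehat{\partial_1^2 g}(\xi)=-(2\pi\xi_1)^2\hat g(\xi)$ then gives the exact identity
\[
2^{2j}\langle g,\psi_{j,k,m}\rangle = -\langle \partial_1^2 g,\tilde\psi^\sharp_{j,k,m}\rangle.
\]

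Squaring this identity reduces the lemma to the Bessel-type estimate $\sum_{j,k,m}|\langle h,\tilde\psi^\sharp_{j,k,m}\rangle|^2 \le C\|h\|_2^2$ for every $h\in L^2(\RR^2)$, applied to $h=\partial_1^2 g$. For the Bessel bound I plan a Plancherel plus Poisson summation argument: fixing $j,k$ and setting $H(\eta)=\hat h(M^T\eta)\overline{\hat{\tilde\psi}(\eta)}$, the sampling formula $\sum_m|\check H(cm)|^2 = c^{-2}\sum_{l\in\ZZ^2}\int H(\eta)\overline{H(\eta+l/c)}\,d\eta$, followed by the change of variable $\xi=M^T\eta$, produces
\[
\sum_m|\langle h,\tilde\psi^\sharp_{j,k,m}\rangle|^2 = \frac{1}{c^2}\sum_{l\in\ZZ^2}\int \hat h(\xi)\overline{\hat h(\xi+M^Tl/c)}\,\overline{\hat{\tilde\psi}(M^{-T}\xi)}\,\hat{\tilde\psi}(M^{-T}\xi+l/c)\,d\xi.
\]
The $l=0$ term, summed over $j,k$ via Tonelli, is controlled as soon as one verifies the pointwise Calder\'on-type bound
\[
\Gamma(\xi) := \sum_{j\ge 0}\sum_{|k|\le\lceil 2^{j/2}\rceil}|\hat{\tilde\psi}(M_{j,k}^{-T}\xi)|^2 \le C \qquad \text{uniformly in } \xi.
\]

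The main technical obstacle will be verifying $\Gamma(\xi)\le C$ together with the summability of the off-diagonal ($l\ne 0$) Poisson terms. For $\Gamma$, note $M_{j,k}^{-T}\xi=(2^{-j}\xi_1,\,-k2^{-j}\xi_1+2^{-j/2}\xi_2)$; I split the range of $j$ at $j_0\approx\log_2|\xi_1|$ so that for $j\ge j_0$ the vanishing factor $(2^{-j}|\xi_1|)^{2(\alpha-2)}$ drives the decay, while for $j<j_0$ the factor $(2^{-j}|\xi_1|)^{-2(\gamma+2)}$ does. In each regime the inner $k$-sum is handled via the $|\eta_2|^{-2\gamma}$ decay, where the spacing $2^{-j}|\xi_1|$ determines how many terms are of order one; the resulting double series is geometric in $j$ and summable using $\alpha>5$ and $\gamma>3$. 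For the off-diagonal terms, I intend to apply a Schur-test argument with weight $|\hat{\tilde\psi}(\eta)||\hat{\tilde\psi}(\eta+l/c)|$, exploiting the two-variable decay of $\hat{\tilde\psi}$ to show $\sum_{l\ne 0}\sup_\eta|\hat{\tilde\psi}(\eta)\hat{\tilde\psi}(\eta+l/c)|<\infty$, which will absorb those contributions into a constant multiple of the diagonal estimate and close the proof.
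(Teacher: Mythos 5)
Your reduction is exactly the paper's: the paper defines an auxiliary generator $\eta$ with $\frac{\partial^2}{\partial x_1^2}\eta=\psi$ (your $\tilde\psi$, up to sign), uses integration by parts to obtain $2^{4j}|\langle g,\psi_{j,k,m}\rangle|^2=|\langle \frac{\partial^2}{\partial x_1^2}g,\eta_{j,k,m}\rangle|^2$, checks that $\eta$ inherits the decay hypothesis with effective exponents $\alpha-2>\gamma>3$, and then concludes by a Bessel bound for the system generated by $\eta$ (Proposition \ref{proposition:upperbd}, quoted from \cite{KKL10a}). The only divergence is that you re-derive that Bessel bound instead of citing it; your Plancherel/Poisson-summation scheme with the Calder\'on sum $\Gamma(\xi)\le C$ is the standard argument and does go through under the stated decay. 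One caveat on that part: the off-diagonal condition $\sum_{l\ne0}\sup_\eta|\hat{\tilde\psi}(\eta)\hat{\tilde\psi}(\eta+l/c)|<\infty$ is not sufficient as written, because for each fixed $l$ you must still sum over the infinitely many pairs $(j,k)$; after Cauchy--Schwarz the quantity that must be controlled is of the form $\sum_{l\ne0}\sup_\xi\sum_{j,k}|\hat{\tilde\psi}(M_{j,k}^{-T}\xi)|\,|\hat{\tilde\psi}(M_{j,k}^{-T}\xi+l/c)|$ (a uniform bound on the mixed Calder\'on sums, with summable decay in $l$). This does hold under your decay estimates on $\hat{\tilde\psi}$, so the issue is one of precision rather than substance.
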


The proof of this lemma will explore the following result from \cite{KKL10a}, which we state here
for the convenience of the reader.

\begin{proposition}\cite{KKL10a}\label{proposition:upperbd}
Let $\psi \in L^2(\RR^2)$ satisfy
\[
|\hat\psi(\xi)| \le C_1 \cdot \min(1,|\xi_1|^{\alpha}) \cdot \min(1,|\xi_1|^{-\gamma}) \cdot \min(1,|\xi_2|^{-\gamma})
\mbox{ for all }\xi = (\xi_1,\xi_2) \in \RR^2 \hspace*{-0.17cm},
\]
where $\alpha>\gamma>3$ and $C_1$ is some constant. Then, there exists some $C>0$ such that, for all $\eta \in L^2(\RR^2)$,
\[
\sum_{j=0}^{\infty}\sum_{|k| \le \lceil 2^{j/2}\rceil}\sum_{m \in \bZ^2} |\langle \eta,\psi_{j,k,m}\rangle|^2 \leq C\cdot \|\eta\|_2^2.
\]
\end{proposition}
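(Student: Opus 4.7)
The plan is to convert the weight $2^{4j}$ into two $x_1$-derivatives of $g$, and then invoke the Bessel-type bound of Proposition \ref{proposition:upperbd}. To this end, I would introduce the auxiliary function $\eta \in L^2(\RR^2)$ defined via its Fourier transform by $\hat\eta(\xi) = -(4\pi^2 \xi_1^2)^{-1}\hat\psi(\xi)$, together with its shearlet-type dilates, shears, and translates $\eta_{j,k,m}(x) = 2^{3j/4}\eta(S_k A_{2^j}x - cm)$. The proof then hinges on the single identity
\[
\langle g,\psi_{j,k,m}\rangle \;=\; 2^{-2j}\,\langle \partial_{x_1}^2 g,\,\eta_{j,k,m}\rangle,
\]
from which squaring, multiplying by $2^{4j}$, and summing over $(j,k,m)$ reduces the lemma to an application of Proposition \ref{proposition:upperbd} with $\eta$ in place of $\psi$ and $\partial_{x_1}^2 g$ in place of the generic $L^2$ argument.

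To obtain the identity, I would pass to the Fourier side. A direct computation of $(S_k A_{2^j})^{-T}$ shows that its first row sends $\xi$ to $2^{-j}\xi_1$; substituting this into the definitions gives the pointwise relation $\hat\psi_{j,k,m}(\xi) = -4\pi^2 \cdot 2^{-2j}\xi_1^2\,\hat\eta_{j,k,m}(\xi)$. Inserting this into $\langle g,\psi_{j,k,m}\rangle = \langle \hat g,\hat\psi_{j,k,m}\rangle$ by Plancherel and recognizing $-4\pi^2 \xi_1^2 \hat g(\xi)$ as $\widehat{\partial_{x_1}^2 g}(\xi)$ yields the identity. Since $g \in C^2$ with compact support in $[0,1]^2$, $\partial_{x_1}^2 g$ is a bona fide $L^2$ function, so both sides are well defined.

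The only real obstacle is verifying that $\eta$ itself satisfies the decay hypothesis of Proposition \ref{proposition:upperbd}. Dividing the given bound on $|\hat\psi(\xi)|$ by $\xi_1^2$ costs two powers of $|\xi_1|$: near $\xi_1 = 0$ this is absorbed by the factor $\min(1,|\xi_1|^\alpha)$, producing $\min(1,|\xi_1|^{\alpha-2})$, while for $|\xi_1| \ge 1$ the decay is only strengthened (from $|\xi_1|^{-\gamma}$ to $|\xi_1|^{-\gamma-2}$, which is trivially still bounded by $|\xi_1|^{-\gamma}$). One obtains
\[
|\hat\eta(\xi)| \;\le\; C\cdot \min(1,|\xi_1|^{\alpha-2})\cdot \min(1,|\xi_1|^{-\gamma})\cdot\min(1,|\xi_2|^{-\gamma}),
\]
which is exactly the form required by Proposition \ref{proposition:upperbd} with parameters $\alpha' = \alpha-2$ and $\gamma' = \gamma$. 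The proposition's constraint $\alpha' > \gamma' > 3$ translates precisely into the hypotheses $\alpha > \gamma + 2$ and $\gamma > 3$ of the lemma, which explains the slightly strengthened assumption on $\alpha$ here as compared to Proposition \ref{proposition:upperbd}. Applying the proposition to $\eta$ and $\partial_{x_1}^2 g$ then completes the proof.
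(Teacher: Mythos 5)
Your proposal does not prove the statement in question. The statement is Proposition \ref{proposition:upperbd}, the Bessel-type bound
\[
\sum_{j=0}^{\infty}\sum_{|k|\le\lceil 2^{j/2}\rceil}\sum_{m\in\ZZ^2}|\langle\eta,\psi_{j,k,m}\rangle|^2\le C\cdot\|\eta\|_2^2
\]
for an \emph{arbitrary} $\eta\in L^2(\RR^2)$, a result the paper imports from \cite{KKL10a} and does not prove itself. What you have written is instead a proof of Lemma \ref{lemma:smoothpart} (the weighted estimate $\sum_{j,k,m}2^{4j}|\langle g,\psi_{j,k,m}\rangle|^2\le C\|\partial_{x_1}^2 g\|_2^2$ for $g\in C^2$ with compact support), and indeed it coincides almost exactly with the paper's proof of that lemma: introduce $\eta$ with $\partial_{x_1}^2\eta=\psi$, verify that $\eta$ inherits the decay hypothesis with $\alpha'=\alpha-2$, trade the weight $2^{4j}$ for two $x_1$-derivatives of $g$ via integration by parts (equivalently, your Fourier-side identity), and then apply Proposition \ref{proposition:upperbd}. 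As an argument for the proposition itself this is circular: the very bound you are asked to establish is invoked as your key tool, and nothing in the write-up addresses where it comes from.

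A proof of Proposition \ref{proposition:upperbd} must be a genuine almost-orthogonality argument, valid for every $\eta\in L^2$, with no smoothness or support assumptions to exploit. The standard route is to pass to the Fourier domain, periodize over the translation lattice $c\ZZ^2$ to control $\sum_{m}|\langle\eta,\psi_{j,k,m}\rangle|^2$ for fixed $(j,k)$ by integrals of $|\hat\eta|^2$ against products $|\hat\psi((S_kA_{2^j})^{-T}\xi)|\,|\hat\psi((S_kA_{2^j})^{-T}\xi+c^{-1}\ell)|$, and then use the hypotheses $\alpha>\gamma>3$ to show that
\[
\sup_{\xi\in\RR^2}\;\sum_{j\ge 0}\;\sum_{|k|\le\lceil 2^{j/2}\rceil}\bigl|\hat\psi\bigl((S_kA_{2^j})^{-T}\xi\bigr)\bigr|^2<\infty
\]
and that the off-diagonal ($\ell\ne 0$) contributions are summable (a Daubechies-type sufficient condition adapted to the cone-restricted shear range $|k|\le\lceil 2^{j/2}\rceil$). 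None of this appears in your proposal, so the statement remains unproved; the material you did write belongs to, and essentially reproduces, the proof of Lemma \ref{lemma:smoothpart}.
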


\begin{proof}{\em \hspace*{-0.2cm} (Proof of Lemma \ref{lemma:smoothpart})}.
By the assumption on $\psi$, the parameters $\alpha$ and $\gamma$ can be chosen
such that
\[
|\hat\psi(\xi)| \le C_1 \cdot \min(1,|\xi_1|^{\alpha}) \cdot \min(1,|\xi_1|^{-\gamma}) \cdot \min(1,|\xi_2|^{-\gamma})
\mbox{ for all }\xi = (\xi_1,\xi_2) \in \RR^2 \hspace*{-0.17cm},
\]
where $\alpha > \gamma+2, \gamma > 3$. Now, let $\eta \in L^2(\RR^2)$ be chosen to satisfy
\[
\frac{\partial^2}{\partial x_1^2} \eta = \psi.
\]
Then a straightforward computation shows that $\eta$ satisfies the hypotheses of Proposition \ref{proposition:upperbd}.
Using integration by parts,
\wqqq{
\[
\left|\left\langle \frac{\partial^2}{\partial x_1^2}g,\eta_{j,k,m}\right\rangle\right|^2 = 2^{4j}|\langle g,\psi_{j,k,m}\rangle|^2,
\]
}
hence, by Proposition \ref{proposition:upperbd},
\begin{eqnarray*}
\sum_{j=0}^{\infty}\sum_{|k| \le \lceil 2^{j/2} \rceil}\sum_{m \in \bZ^2} 2^{4j} |\langle g,\psi_{j,k,m}\rangle|^2
&=& \sum_{j=0}^{\infty}\sum_{|k| \le \lceil 2^{j/2} \rceil}\sum_{m \in \bZ^2} \left|\left\langle \frac{\partial^2}{\partial x_1^2}g,
\eta_{j,k,m}\right\rangle\right|^2
\\
&<& C \cdot\left  \|  \frac{\partial^2}{\partial x_1^2} g \right \|_2^2.
\end{eqnarray*}
The proof is complete.
\end{proof}

This now enables us to derive Proposition \ref{prop:main1} as a corollary.

\begin{proof}{\em \hspace*{-0.2cm} (Proof of Proposition \ref{prop:main1})}.
Set
\[
\tilde{\Lambda}_j = \{\lambda \in \Lambda_j : \Sp(\psi_{\lambda}) \cap \Sp(g) \neq \emptyset \},\quad j > 0,
\]
i.e., $\tilde{\Lambda}_j$ is the set of indices in $\Lambda_j$ associated with shearlets
whose support intersects the support of $g$. Then, for each $J > 0$, we have
\beq \label{eq:NJ}
N_{J} = \Big|\bigcup_{j=0}^{J-1} \tilde{\Lambda}_j \Big| \sim 2^{2J}.
\eeq
Now, first observe that there exists some $C>0$ such that
\begin{eqnarray*}
\sum_{j = 1}^{\infty}2^{4j} \sum_{n>N_{j}} |\theta(g)|_n^2
& \leq & C \cdot \sum_{j=1}^{\infty}\sum_{j'=j}^{\infty}\sum_{k,m} 2^{4j} |\langle \wqqq{g},\psi_{j',k,m}\rangle|^2\\
& = & C\cdot\sum_{j'=1}^{\infty}\sum_{k,m}|\langle \wqqq{g},\psi_{j',k,m}\rangle|^2\left( \sum_{j=1}^{j'}2^{4j}\right).
\end{eqnarray*}
By Lemma \ref{lemma:smoothpart}, this implies
\[
\sum_{j = 1}^{\infty}2^{4j} \sum_{n>N_{j}} |\theta(g)|_n^2
\leq C \cdot \sum_{j'=1}^{\infty}\sum_{k,m}2^{4j'} |\langle \wqqq{g},\psi_{j',k,m}\rangle|^2 < \infty
\]
and hence, also by \eqref{eq:NJ},
\[
\sum_{n>N_{j}} |\theta(g)|_n^2 \leq C \cdot (2^{2j})^{-2} \leq \wq{C} \cdot N_{j}^{-2}.
\]
Finally, let $N>0$. Then there exists a positive integer $j_0>0$ satisfying
\[
N \sim N_{j_0} \sim 2^{2j_0},
\]
and the claim is proved.
\end{proof}


\section{Analysis of Shearlet Coefficients associated with the Discontinuity Curve}
\label{sec:discontinuity}

\subsection{Proof of Proposition \ref{prop:main2}}
Let $(j,k,m) \in \Lambda_{j,p}$, and fix $\hat{x}=(\hat{x}_1,\hat{x}_2) \in \wqq{\intt(\cQ_{j,p}) \cap \intt(\Sp(\psi_{\lambda}))} \cap \partial B$.
Let $s$ be the slope of the tangent to the edge curve $\partial B$
at $(\hat{x}_1,\hat{x}_2)$ as defined in Proposition \ref{prop:main2}, i.e.,
if  $\partial B$ is parameterized by $(E(x_2),x_2)$ in the interior of $\cS_{j,p}$, then $s = E'(\hat x_2)$,
and if  $\partial B$ is parameterized by $(x_1,E(x_1))$ in the interior of $\cS_{j,p}$, then $s = E'(\hat x_1)^{-1}$,
where we now assume that $E'(\hat x_1) \neq 0$ and consider the case $E'(\hat x_1) = 0$ later.

By translation symmetry, WLOG we can assume
that the edge curve satisfies $E(0)=0$ with $(\hat{x}_1,\hat{x}_2) = (0,0)$.
Further, since the conditions (i) and (ii) in Theorem \ref{theo:main}
are independent on the translation parameter $m$, it does not play a role in our analysis. Hence, WLOG we choose $m = 0$.
Also, since $\psi$ is compactly supported, there exists some $L > 0$ such that $\Sp{(\psi)} \subset [-L/2,L/2]^2$. By a rescaling
argument, we can might \wq{assume} $L=1$. Even more, WLOG we can assume that $\Sp{(\psi)} = [-1/2,1/2]^2$, which implies
\begin{eqnarray} \nonumber
\Sp(\psi_{j,k,0})
& = & \{ x \in \RR^2 : -2^{-j/2} k x_2 -2^{-j-1}\leq x_1 \leq -2^{-j/2} k x_2+2^{-j-1},\\ \label{eq:supp}
& &  -2^{-\frac{j}{2}} \leq 2x_2 \leq 2^{-\frac{j}{2}}\},
\end{eqnarray}
since this will not change our asymptotic estimate for $|\Lambda_{j,p}(\epsilon)|$.

Let now $f \in \cE^2(\nu)$, and select ${\mathcal{P}}$ to be the smallest parallelogram which entirely contains the discontinuity curve
parameterized by $(E(x_2),x_2)$ or $(x_1,E(x_1))$ in the interior of $\Sp(\psi_{j,k,0})$ and \wq{whose two sides are parallel to
the tangent $x_1 = sx_2$ to the discontinuity curve at $(\hat{x}_1,\hat{x}_2) = (0,0)$.} For an illustration, we refer to Figure \ref{fig:edgecurve1}.
We now split the coefficients $|\langle f,\psi_{j,k,0}\rangle|$ into the part `inside the parallelogram' and `outside' of it
exploiting the shearing property of shearlets for the second part, and obtain
\begin{equation}\label{eq:shear_coeff}
|\langle f,\psi_{j,k,0}\rangle| = \wq{|\langle \chi_{\mathcal{P}} f,\psi_{j,k,0}\rangle|}
+ |\langle \chi_{{\mathcal{P}}^c}f(S_s \cdot),\psi_{j,\hat k,0}\rangle|
\end{equation}
where $\hat k = k+2^{j/2}s$. From now on, we assume that $\hat k < 0$ with $\hat k = k+2^{j/2}s$. The case $k+2^{j/2}s \ge 0$ can be handled similarly.

Let us start by estimating the first term \wq{$|\langle \chi_{\mathcal{P}}f,\psi_{j,k,0}\rangle|$} in \eqref{eq:shear_coeff}
stated as

{\bf Claim 1.}
\beq \label{eq:term1}
\wq{|\langle \chi_{\mathcal{P}}f,\psi_{j,k,0}\rangle|} \leq C \cdot \frac{(1+|s|^2)^{3/2}}{2^{3j/4} \cdot |\hat{k}|^{3}}.
\eeq

\begin{figure}[h]
\begin{center}
\hspace{-50pt}
\includegraphics[width=13cm]{./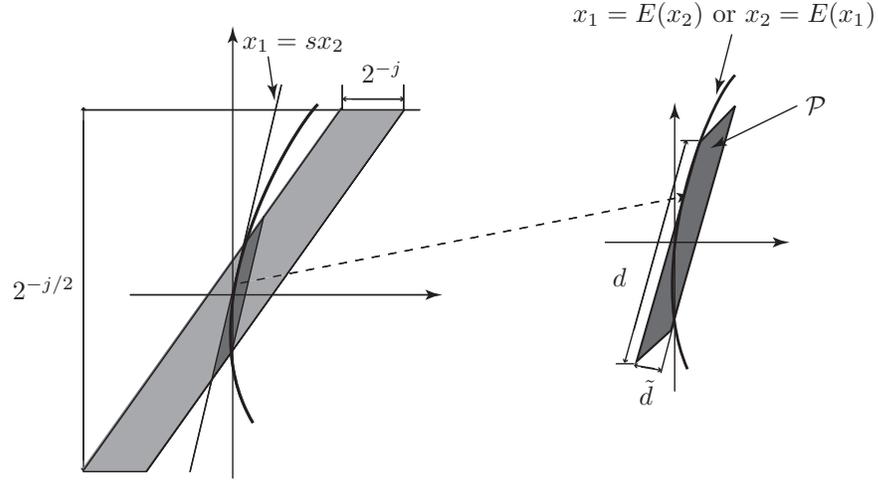}
\put(-85,310){\footnotesize{$x_1=E(x_2)$ or $x_2 = E(x_1)$}}
\put(-297,205){\footnotesize{$ 2^{-j/2}$}}
\put(3,275){\footnotesize{$\mathcal{P}$}}
\put(-165,287){\footnotesize{$2^{-j}$}}
\put(-210,300){\footnotesize{$x_1 = sx_2$}}
\put(-70,210){\footnotesize{$d$}}
\put(-60,165){\footnotesize{$\tilde{d}$}}
\end{center}
\vspace{-130pt}
\caption{A shearlet $\psi_{j,{k},0}$ intersecting the edge curve $x_1 = E(x_2)$ or \wq{$x_2=E(x_1)$.} The right hand side shows
a magnification of the parallelogram \wql{ $\mathcal{P}$}.}
\label{fig:edgecurve1}
\end{figure}

First, notice that the tangent to the edge curve $(E(x_2),x_2)$ (or $(x_1,E(x_1))$) is given by $x_1 = sx_2$. We now assume that the edge curve is
contained in a set $\{x \in \R^2 : x_1 \ge sx_2 \}$, and just remark that the general case can be handled similarly.
Let now $d$ be the length of that side of $\mathcal{P}$, which is a part of the tangent $x_1 = sx_2$. We observe that $d$ is in fact the
distance between two points, in which the tangent $x_1=sx_2$ intersects the boundary of $\Sp (\psi_{j,k,0})$. For an illustration we
wish to refer to Figure \ref{fig:edgecurve1}. From this observation, it follows that
\[
d = \frac{2^{-j/2}\sqrt{1+|s|^2}}{|\hat{k}|}.
\]
We now let $\tilde{d}$ be the height of $\mathcal{P}$. Since the edge curve can be parameterized by a $C^2$ function $E$ with bounded curvature,
\[
\tilde{d} \leq C \cdot \Bigl(\frac{2^{-j/2}\sqrt{1+|s|^2}}{|\hat{k}|}\Bigr)^2.
\]
Summarizing, the volume of \wq{$\mathcal{P}$} can be estimated as
\[
|\mathcal{P}| \le C \cdot \frac{(1+|s|^2)^{3/2}}{2^{3j/2} \cdot {|\hat{k}|^3 }}.
\]
This implies
\[
\wq{|\langle f \chi_{\mathcal{P}},\psi_{j,k,0}\rangle|}
\le C \cdot 2^{3j/4} \cdot \|f\|_{\infty}\cdot \|\psi\|_{\infty} \cdot \frac{(1+|s|^2)^{3/2}}{2^{3j/2} \cdot {|\hat{k}|^3}}
\leq C \cdot \frac{(1+|s|^2)^{3/2}}{2^{3j/4} \cdot |\hat{k}|^{3}},
\]
and Claim 1, i.e., estimate \eqref{eq:term1}, is proved.

Next, we estimate the second term, i.e., $|\langle \chi_{\mathcal{P}^c}f(S_s \cdot),\psi_{j,\hat k,0}\rangle|$, in \eqref{eq:shear_coeff} stated as

{\bf Claim 2.}
\beq \label{eq:term2}
|\langle \chi_{\mathcal{P}^c}f(S_s \cdot),\psi_{j,\hat k,0}\rangle|
\leq C \cdot (1+|s|)^2 \cdot \left( \frac{1}{2^{3j/4} \cdot |\hat{k}|^{3}}+\frac{1}{2^{7j/4} \cdot |\hat{k}|^2}\right).
\eeq

Notice that $S_s^{-1}\mathcal{P}$ entirely contains the edge curve of $f(S_s \cdot)$ in the interior of $\Sp(\psi_{j,\hat k,0})$ and that the boundary of
the parallelogram $S_s^{-1}\mathcal{P}$ consists of two vertical line segments in the interior of $\Sp (\psi_{j,\hat k,0})$ (see Figure \ref{fig:edgecurve1_1}).
By translation symmetry, this implies that for proving Claim 2, it suffices to estimate
\[
\langle f_0(S_s\cdot)\chi_{\Omega},\psi_{j,\hat{k},0}\rangle,
\]
where $\Omega = \{(x_1,x_2) \in \bR^2:x_1>0\}$, $f_0 \in C^2(\RR^2)$ compactly supported in $[0,1]^2$
and $\sum_{|\alpha| \le 2}\|D^{\alpha}f_0\|_{\infty} \le 1$. We wish to mention that the consideration of the case $x_1>0$ -- compare the definition
of the set $\Omega$ -- is by no means restrictive, since the case  $x_1 \leq 0$ can be handled in a similar way.
\begin{figure}[h]
\begin{center}
\hspace{-50pt}
\includegraphics[width=13cm]{./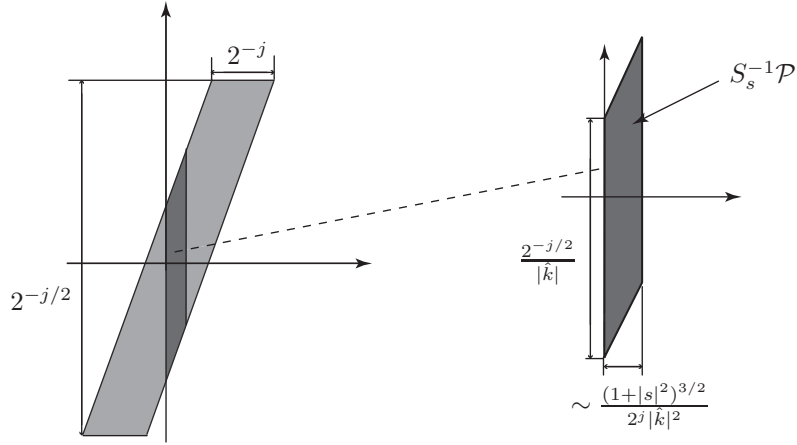}
\put(-277,184){\footnotesize{$ 2^{-j/2}$}}
\put(-5,270){\footnotesize{$S^{-1}_s\mathcal{P}$}}
\put(-195,277){\footnotesize{$2^{-j}$}}
\put(-85,200){\footnotesize{$\frac{2^{-j/2}}{|\hat{k}|}$}}
\put(-65,145){\footnotesize{$\sim \frac{(1+|s|^2)^{3/2}}{2^j|\hat k|^2}$}}
\end{center}
\vspace{-130pt}
\caption{A shearlet $\psi_{j,\hat{k},0}$ intersecting \wq{the parallelogram \wql{ $S_s^{-1}\mathcal{P}$}}. The right hand side shows
a magnification of the parallelogram \wql{ $S_s^{-1}\mathcal{P}$}. }
\label{fig:edgecurve1_1}
\end{figure}

Now, again by translation symmetry, we may assume that $\partial (\supp \psi_{j,\hat{k},0})$ intersects the origin.
In particular, we may translate $\Sp (\psi_{j,\hat k,0})$ defined in \eqref{eq:supp} so that we might now consider
\[
\Sp (\psi_{j,\hat k,0}) + (-2^{j-1},0)\]
as the support of $\psi_{j,\hat k, 0}$. We refer the reader to Figure \ref{fig:edgecurve2} for an illustration.
This implies that there is one side of the boundary $\partial (\supp \psi_{j,\hat{k},0})$, which is a part of the line
\[
\mathcal{L} = \{(x_1,x_2) \in \RR^2 :  x_2 = (-2^{j/2}/\hat{k}) \cdot x_1\}
\]
with slope $-2^{j/2}/\hat{k}$, as described in Figure \ref{fig:edgecurve2}. Applying the Taylor expansion for \wq{$f_0(S_s \cdot)$} at
each point $x = (x_1,x_2) \in \mathcal{L}$, we obtain
\[
\wq{ f_0(S_s x) }= a(x_1)+b(x_1)\left(x_2+\frac{2^{j/2}}{\hat{k}} \cdot x_1\right)+c(x_1,x_2)\left(x_2+\frac{2^{j/2}}{\hat{k}}\cdot x_1\right)^2,
\]
where $a(x_1),b(x_1)$ and $c(x_1,x_2)$ are all bounded in absolute value by $C(1+|s|)^2$. This implies (compare also an illustration
of the area of integration in Figure \ref{fig:edgecurve2})
{\allowdisplaybreaks
\begin{eqnarray}\label{eq:splitting0}
|\langle \wql{ \wq{f_0(S_s \cdot)\chi_{\Omega}}},\psi_{j,\hat{k},0}\rangle|
&=& \left| \int_{0}^{-\frac{\hat{k}}{2^{j}}}
\int_{-\frac{2^{j/2}}{\hat{k}}\cdot x_1}^{-\frac{2^{j/2}}{\hat{k}}\cdot x_1- \frac{2^{-j/2}}{\hat{k}}}
\wq{f_0(S_s x)} \psi_{j,\hat{k},0}(x) \, dx_2dx_1\right| \nonumber \\
&\le& C\cdot \wql{ (1+|s|)^2} \cdot \left| \int_{0}^{-\frac{\hat{k}}{2^{j}}}\wq{\sum_{\ell=1}^{3}I_{\ell}(x_1)} \, dx_1 \right|,
\end{eqnarray}
}
where
{\allowdisplaybreaks
\begin{eqnarray*}
I_1(x_1) &=& \left| \int_{0}^{- \frac{2^{-j/2}}{\hat{k}}}T_{\beta}\left(\psi_{j,\hat{k},0}(x_1,x_2)\right)dx_2\right|, \\
I_2(x_2) &=& \left| \int_{0}^{- \frac{2^{-j/2}}{\hat{k}}}x_2 \cdot T_{\beta}\left(\psi_{j,\hat{k},0}(x_1,x_2)\right)dx_2\right|, \\
I_3(x_2) &=& \left| \int_{0}^{- \frac{2^{-j/2}}{\hat{k}}}x_2^2 \cdot T_{\beta}\left(\psi_{j,\hat{k},0}(x_1,x_2)\right)dx_2\right|,
\end{eqnarray*}
}
with $T_{\beta}$ being the translation operator defined by $T_{\beta}(f) = f(\cdot -\beta)$ and $\beta \in \RR$ being chosen
to be $\beta = (0,(2^{j/2}/\hat{k}) \cdot x_1).$

\begin{figure}[ht]
\begin{center}
\includegraphics[width=5.5cm]{./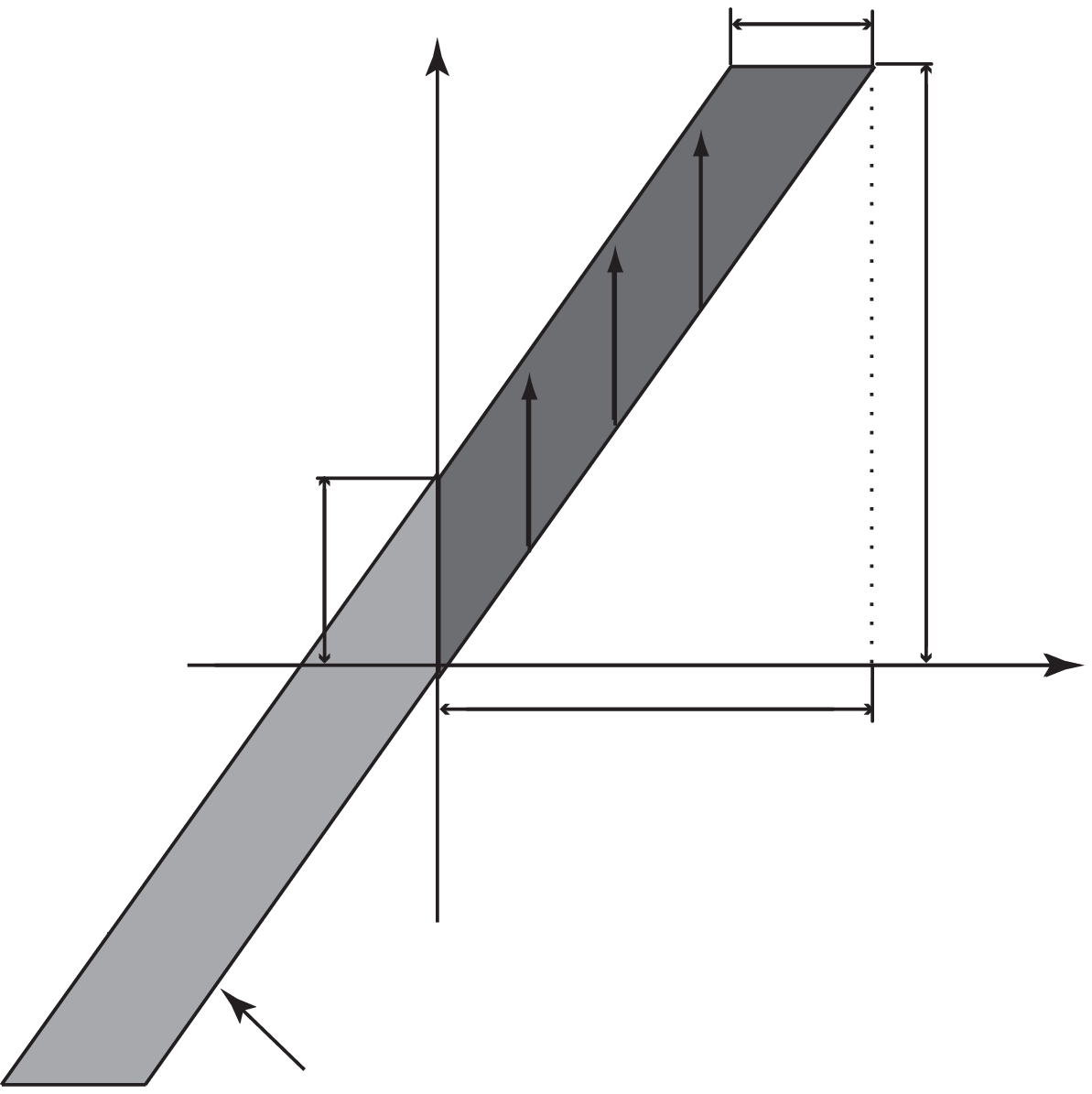}
\put(-142,78){\footnotesize{$\frac{2^{-j/2}}{|\hat{k}|}$}}
\put(-70,42){\footnotesize{$\frac{|\hat{k}|}{2^j}$}}
\put(-18,105){\footnotesize{$ 2^{-j/2}$}}
\put(-110,0){\footnotesize{$\mathcal{L}$}}
\put(-46,160){\footnotesize{$ 2^{-j}$}}
\end{center}
\caption{\wang{A shearlet $\psi_{j,\hat{k},0}$ intersecting the edge curve $x_1 = 0$  such
that $\supp(\psi_{j,\hat{k},0})$ intersects the positive $x_2$ axis and
$\partial (\supp \psi_{j,\hat{k},0})$ intersects the origin. The illustration also highlights the integration area for \eqref{eq:splitting0}. }}
\label{fig:edgecurve2}
\end{figure}

We first estimate $I_1(x_1)$. We observe that, since
\[
\{(x_1,x_2) \in \bR^2 : \psi_{j,\hat{k},0}(x_1,x_2) \ne 0\} \subset [0,2^{-j/2}/|\hat{k}|] \quad \text{for a fixed } x_1>0,
\]
WLOG, for any $x_1 > 0$, the interval $[0, 2^{-j/2}/\wqq{|\hat{k}|}]$
for the range of the integration in $I_1(x_1)$ can be replaced by $\bR$ (see also Figure \ref{fig:edgecurve2}).
Therefore, we have
\beq\label{eq:estimateI}
I_1(x_1) = \left|\int_{\bR} \psi_{j,\hat{k},0}(x_1,x_2)dx_2\right|
= \left| \int_{\bR}  \hat{\psi}_{j,\hat{k},0}(\xi_1,0) \cdot e^{2\pi i \wql{ x_1}\xi_1}d\xi_1\right|.
\eeq
Now
\[
|\hat{\psi}_{j,\hat{k},0}(\xi_1,\xi_2)| = 2^{-3j/4} \cdot |\hat\psi(2^{-j}\xi_1,2^{-j/2}\xi_2-2^{-j}\hat{k}\xi_1)|.
\]
and hence, by hypothesis (i) from Theorem \ref{theo:main},
\begin{equation}\label{eq:fourier1}
|\hat{\psi}_{j,\hat{k},0}(\xi_1,0)| \leq 2^{j/4} \cdot |2^{-j}h(2^{-j}\xi_1)| \cdot |\hat{k}|^{-\gamma}.
\end{equation}
By \eqref{eq:estimateI} and \eqref{eq:fourier1}, it follows that
\begin{equation}\label{eq:mainestimate1}
I_1(x_1) \le C \cdot \frac{2^{j/4}}{|\hat{k}|^{\gamma}} \quad \mbox{for some } C > 0.
\end{equation}

Next, we estimate $I_2(x_1)$. We have
\begin{eqnarray*}
I_2(x_1)
& \leq & \left|\int_{\bR} x_2 \cdot \psi_{j,\hat{k},0}(x_1,x_2)dx_2\right|
+ \frac{2^{j/2}}{|\hat{k}|} \cdot |x_1| \cdot \left|\int_{\bR}\psi_{j,\hat{k},0}(x_1,x_2)dx_2 \right|\\
& = & S_1 + S_2.
\end{eqnarray*}
To estimate $S_1$, observe that, by hypothesis (ii) from Theorem \ref{theo:main},
\begin{eqnarray}\nonumber
S_1 & = & \wqqq{\frac{1}{2\pi}}\left| \int_{\bR}
\left(\frac{\partial}{\partial \xi_2} \hat{\psi}_{j,\hat{k},0}\right)(\xi_1,0) e^{2\pi i \xi_1 x_1}d\xi_1\right|\\ \label{eq:estimateS1}
& \le & \frac{1}{2\pi}\int_{\bR} (2^{-\frac{j}{4}} \cdot h(2^{-j}\xi_1)) \cdot 2^{-j} \cdot |\hat{k}|^{-\gamma} d\xi_1.
\end{eqnarray}
By \eqref{eq:mainestimate1} and the fact that $0 \le x_1 \le \frac{|\hat{k}|}{2^j}$, the second
term $S_2$ can be estimated as
\beq \label{eq:estimateS2}
S_2 \le C \cdot \left(\frac{2^{j/2}}{|\hat{k}|}|x_1|\right) \cdot \frac{2^{j/4}}{|\hat{k}|^{\gamma}}
\le C \cdot \left( 2^{-j/2}\right) \cdot \frac{2^{j/4}}{|\hat{k}|^{\gamma}}
\le \frac{C}{2^{j/4} \cdot |\hat{k}|^{\gamma}}.
\eeq
Concluding from \eqref{eq:estimateS1} and \eqref{eq:estimateS2}, we obtain
\begin{equation}\label{eq:mainestimate2}
I_2(x_1) \le S_1 + S_2  \le \frac{C}{2^{j/4} \cdot |\hat{k}|^{\gamma}}.
\end{equation}

Finally, we estimate $I_3(x_1)$. For this, notice that $2^{-3j/4}T_{\beta}(\psi_{j,\hat{k},0}(x_1,x_2))$ is bounded, hence
\begin{equation}\label{eq:mainestimate3}
I_3(x_1) \le 2^{\frac{3}{4}j} \cdot C \cdot \left|\int_{0}^{\frac{-1}{2^{j/2}\hat{k}}}x_2^2 \, dx_2\right| \leq \frac{C}{2^{\frac{3}{4}j} \cdot |\hat{k}|^{3}}.
\end{equation}

Summarizing, by \eqref{eq:splitting0}, \eqref{eq:mainestimate1}, \eqref{eq:mainestimate2}, and \eqref{eq:mainestimate3},
\begin{eqnarray*}
|\langle \wq{f_0(S_s \cdot)\chi_{\Omega}},\psi_{j,\hat{k},0}\rangle|
& \le & C \cdot (1+|s|)^2 \cdot \int_{0}^{\frac{|\hat{k}|}{2^j}}\left(\frac{2^{\frac{1}{4}j}}{|\hat{k}|^{\gamma}} \nonumber
+\frac{1}{2^{\frac{3}{4}j} \cdot |\hat{k}|^3}\right)dx_1\\
& \le & C \cdot (1+|s|)^2 \cdot \left( \frac{1}{2^{3j/4} \cdot |\hat{k}|^{3}}+\frac{1}{2^{\frac{7}{4}j} \cdot |\hat{k}|^2}\right),
\end{eqnarray*}
and Claim 2, i.e., estimate \eqref{eq:term2}, is proved.

From Claim 1 and 2, i.e., from \eqref{eq:term1} and \eqref{eq:term2}, we conclude that
\[
|\langle f,\psi_{j,\wq{{k}},0}\rangle| \leq C\Bigl[ (1+|s|)^2\Bigl( \frac{1}{2^{\frac{3}{4}j} \cdot |\hat k|^3}+\frac{1}{2^{\frac{7}{4}j} \cdot |\hat k|^2}\Bigr)
+(1+|s|^2)^{3/2}\frac{1}{2^{\frac{3}{4}j} \cdot |\hat k|^3}\Bigr].
\]
This implies \eqref{eq:estimate1} and \eqref{eq:estimate2} except for the case $s = \infty$, which we will study now.

Finally, we consider the case $s = \infty$, i.e., we assume that the edge curve is parameterized by $(x_1,E(x_1))$ in the interior of $\cS_{j,p}$
such that  $E \in C^2$ and $E'(\hat x_1) = 0$. As before, let $f \in \cE^2(\nu)$, and select ${\mathcal{P}}$ to be the smallest parallelogram which
entirely contains the discontinuity curve in the interior of $\Sp(\psi_{j,k,0})$ \wq{and whose two sides are parallel to $x_2 = 0$}. Similarly as before, we consider
\begin{equation} \label{eq:shear_coeff2}
|\langle f,\psi_{j,k,0}\rangle| = |\langle \chi_{\mathcal{P}}f,\psi_{j,k,0}\rangle| + |\langle \chi_{{\mathcal{P}}^c}f,\psi_{j,k,0}\rangle|.
\end{equation}
and estimate both terms on the RHS separately.

We first consider the term $|\langle \chi_{\mathcal{P}}f,\psi_{j,k,0}\rangle|$. Using similar arguments as before, which we decided not to
include in detail to avoid repetitions, one can prove that
\begin{equation}\label{eq:t1}
|\langle \chi_{\mathcal{P}}f,\psi_{j,k,0}\rangle| \leq C \cdot 2^{-\frac{9}{4}j}.
\end{equation}

\wq{Turning our attention} to the second term $|\langle \chi_{{\mathcal{P}}^c}f,\psi_{j,k,0}\rangle|$, we first observe that
\wq{$\mathcal{P}$ entirely contains the edge curve of $f$ in the interior of $\Sp(\psi_{j, k,0})$ and that the boundary of
the parallelogram $\mathcal{P}$ consists of two horizontal line segments in the interior of $\Sp (\psi_{j,k,0})$ (see Figure \ref{fig:edgecurve3}).
By translation symmetry, this implies that for the second term in \eqref{eq:shear_coeff2}, i.e., for $|\langle \chi_{{\mathcal{P}}^c}f,\psi_{j,k,0}\rangle|$,
it suffices to estimate} $|\langle f_0\chi_{\tilde{\Omega}},\psi_{j,k,0}\rangle|$, where $\tilde{\Omega} = \{(x_1,x_2) \in \bR^2:x_2>0\}$ and
$f_0 \in C^2(\RR^2)$ compactly supported in $[0,1]^2$ with $\sum_{|\alpha| \le 2}\|D^{\alpha}f_0\|_{\infty} \le 1$.
As before, the consideration of the case $x_2>0$ -- compare the definition of the set $\tilde{\Omega}$ -- is by no means restrictive, since the case
$x_2 \leq 0$ can be handled in a similar way. Observe that \wq{the (horizontal) vanishing moment condition}
follows from condition (i) in Theorem \ref{theo:main}. Let us briefly think about this: Letting $\xi_2$ be fixed, condition (i)
immediately implies $\hat \psi(0,\xi_2) = 0$. Then, by applying Taylor expansion, it follows that
$\hat \psi(\xi_1,\xi_2) = \frac{\partial\hat \psi}{\partial \xi_1}(0,\xi_2) \cdot \xi_1 + \mathcal{O}(\xi_1^2)$, and,
again by condition (i), we have $\frac{\hat \psi(\xi_1,\xi_2)}{\xi_1} \to 0$ as $\xi_1 \rightarrow 0$, hence
$\frac{\partial\hat \psi}{\partial \xi_1}(0,\xi_2) = 0$. This procedure can now be continued.
Especially, condition (i) from Theorem \ref{theo:main} implies
\begin{equation}\label{eq:moment}
\int_{\bR} x_1^{\ell}\cdot\psi(x_1,x_2)dx_1 = 0 \quad \mbox{for all } x_2 \in \bR \mbox{ and } \ell=0,1.
\end{equation}
Further, we utilize that the shearing operation $S_k$ preserves vanishing moments along the $x_1$ axis.
That is,
\wq{
\[
\int_{\bR}x_1^{\ell} \cdot \psi(S_k (x_1,x_2)^T)dx_1 = 0
\quad \mbox{for all }k, x_2 \in \bR \,\, \mbox{and}\,\, \ell=0,1.
\]
}
This can be seen as follows. For $\ell=0, 1$ and for a fixed $x_2 \in \bR$, the function
$x_1 \mapsto (x_1-kx_2)^{\ell}$ is a polynomial
of degree less than or equal to $\ell$, hence, by condition \eqref{eq:moment} on the number of
vanishing moments on $\psi$, we have
\[
\int_{\bR}x_1^{\ell} \cdot \psi(S_k (x_1,x_2)^T)dx_1 = \int_{\bR}(x_1-kx_2)^{\ell} \cdot \psi(x_1,x_2)dx_1 = 0
\quad \mbox{for all }k \in \bR.
\]
\begin{figure}[ht]
\begin{center}
\includegraphics[width=5.5cm]{./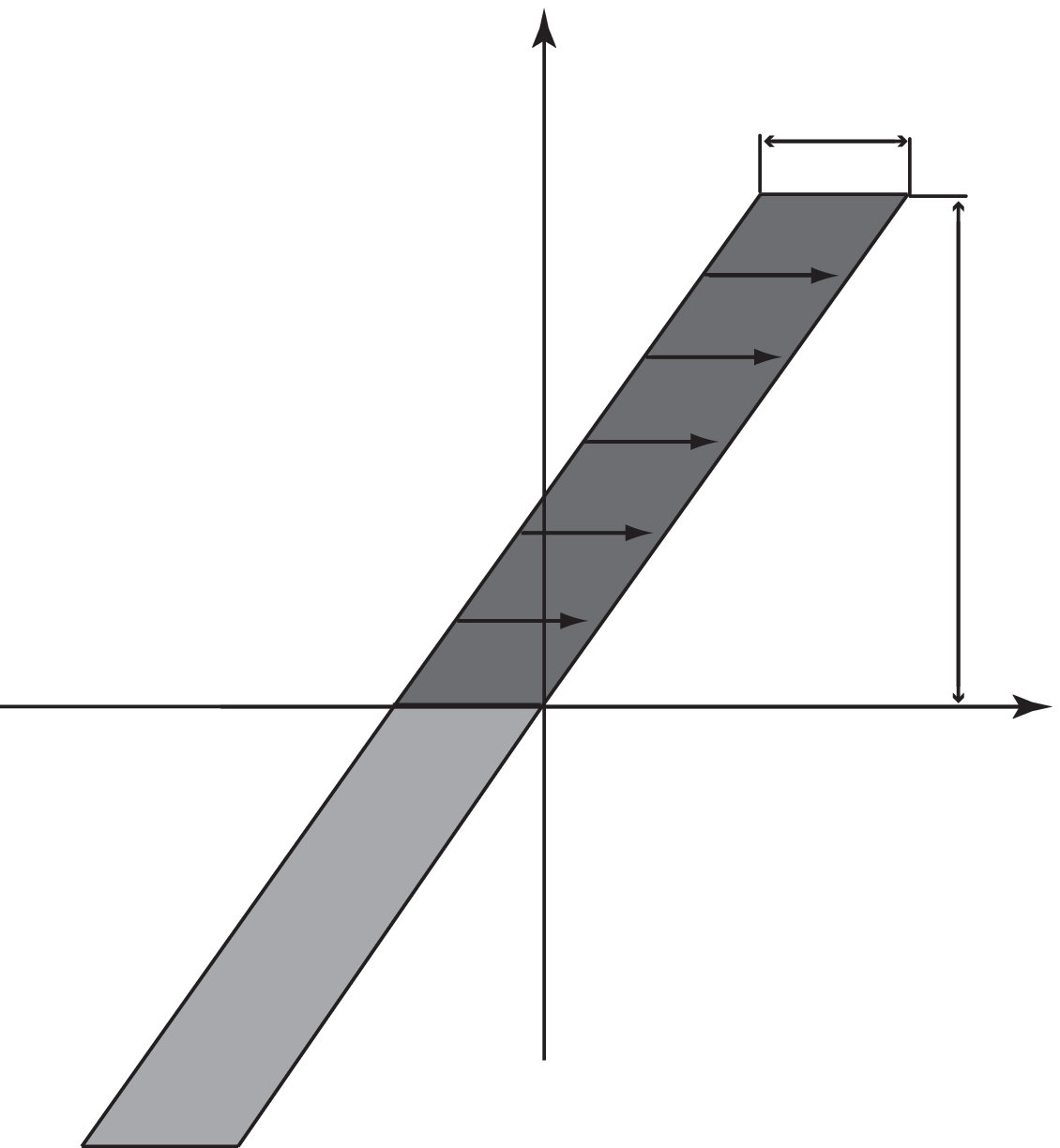}
\put(-38,155){\footnotesize{$2^{-j}$}}
\put(-9,105){\footnotesize{$2^{-j/2}$}}
\end{center}
\caption{\wang{A shearlet $\psi_{j,\hat{k},0}$ intersecting the edge curve $x_2 = 0$ such
that $\supp(\psi_{j,\hat{k},0})$ intersects the positive $x_1$ axis and
$\partial (\supp \psi_{j,\hat{k},0})$ intersects the origin. The illustration also highlights the integration area for \eqref{eq:t2}. }}
\label{fig:edgecurve3}
\end{figure}

Employing Taylor expansion and integration (compare Figure \ref{fig:edgecurve3}) similar to the
proof of the previous case, we finally obtain
\begin{equation}\label{eq:t2}
|\langle f_0\chi_{\tilde{\Omega}},\psi_{j,\wq{{k}},0} \rangle|
\le C \cdot 2^{3j/4} \cdot \int_{0}^{\cdot 2^{-j/2}}\int_{-2^{-j}}^{0} \, x_1^2 dx_1dx_2
\le C \cdot 2^{-11j/4}
\end{equation}
\wql{
By \eqref{eq:t1} and \eqref{eq:t2}, we obtain \eqref{eq:estimate2} when $s = \infty$. This completes the proof. }

\end{document}